\theoremstyle{plain}
\newtheorem{theorem}{Theorem} 
\newtheorem{corollary}{Corollary} 
\newtheorem{lemma}{Lemma} 
\theoremstyle{definition}
\newtheorem{definition}{Definition}
\newtheorem{example}{Example} 
\theoremstyle{remark}
\newtheorem{remark}{Remark} 
\newcommand{\R}{\ensuremath{\mathbb{R}}}
\title{Torsion and semi-degeneracy of\\
    second-order maximally superintegrable systems}
\author{Jeremy Nugent${}^\dagger$ and Andreas Vollmer${}^\S$\\[.3cm]
\begin{minipage}{.85\textwidth}\scshape
    \scriptsize $\dagger$ School of Mathematics and Statistics, The University of New South Wales,
Sydney 2052, Australia.
\hspace*{\fill} e-mail: \texttt{\upshape j.nugent@unsw.edu.au}\\
    \scriptsize $\S$ University of Hamburg, Department of Mathematics, Bundesstra{\ss}e 55, 20146 Hamburg, Germany.\hspace*{\fill} e-mail: \texttt{\upshape andreas.vollmer@uni-hamburg.de} 
\end{minipage} }
\date{\today}
\begin{document}

\maketitle

\begin{abstract}
The isotropic harmonic oscillator and the Kepler-Coulomb system are pivotal models in the Sciences. They are two examples of second-order (maximally) superintegrable (Hamiltonian) systems. These systems are classified in dimension two. A partial classification exists in dimension three.

In this paper, our focus is on second-order superintegrable systems with a $(n+1)$-parameter potential with $n\geq3$.
We find that these systems are underpinned by an information-geometric structure, namely the structure of a statistical manifold with torsion.

We obtain a necessary and sufficient condition for such systems to extend to non-degenerate systems, i.e.\ to admit a maximal family of compatible potentials.
The condition is geometric: we show that a $(n+1)$-parameter potential is the restriction of a non-degenerate potential if and only if a certain trace-free tensor field vanishes.
We interpret this condition as the requirement that a certain affine connection has \emph{vectorial torsion}.
We also show that the condition for a system to be extendable is conformally invariant, allowing us to extend our results to second-order \emph{conformally} superintegrable systems with a $(n+1)$-parameter potential. 
\end{abstract}
\bigskip

\noindent\textsc{MSC2020:}
70H33; 
53B21, 
70H06, 
35N10. 
\smallskip

\noindent\textsc{Keywords:}
superintegrability, semi-degenerate system, overdetermined PDE system, statistical manifolds with torsion.

\section{Introduction}
Let $(M,g)$ be a Riemannian (smooth) manifold (of dimension $n\geq2$) and $V:M\to\R$ a function on~$M$. Our discussion here is local and we therefore assume that $(M,g)$ is simply connected. We call the function $H:T^*M\to\R$,
\begin{equation}\label{eq:Hamiltonian}
    H(x,p) = g^{ij}(x)p_ip_j+V(x)
\end{equation}
(Einstein's summation convention applies) the \emph{Hamiltonian} on $M$, where $(x,p)$ are canonical Darboux coordinates on $T^*M$.
We denote the canonical Poisson bracket on $M$ by $\{-,-\}$.

Conceptually speaking, a Hamiltonian is superintegrable if there are more symmetries $T^*M\to\R$ that Poisson commute with $H$ than an integrable system.
More precisely, a (maximally) \emph{super\-integrable system} is a Hamiltonian $H=:F^{(0)}$ together with $2n-2$ functions $F^{(\alpha)}:T^*M\to\R$, $1\leq\alpha\leq 2n-2$, such that
\begin{equation}\label{eq:integral}
    \{H,F^{(\alpha)}\} = 0
\end{equation}
for all $1\leq\alpha\leq 2n-2$ and such that
$(F^{(\alpha)})_{0\leq\alpha\leq 2n-2}$
are functionally independent.

\begin{remark}
    Note that the condition~\eqref{eq:integral} automatically holds true for $\alpha=0$.
    We also observe that
    \[
        \left\{H,\sum_{\alpha=0}^{2n-2}c_\alpha F^{(\alpha)}\right\} = 0
    \]
    for all choices $c_\alpha\in\R$, $0\leq \alpha\leq 2n-2$, and we therefore introduce the linear space
    \[
        \mathcal F = \left\{ \sum_{\alpha=0}^{2n-2}c_\alpha F^{(\alpha)} : c_\alpha\in\R \right\}.
    \]
\end{remark}

In the present paper we will be concerned with \emph{second-order superintegrable systems}:
\begin{definition}
    A superintegrable system is called \emph{second order} if the integrals $F^{(\alpha)}$ can be chosen to be quadratic polynomials in the momenta, i.e.\ of the form
    \begin{equation}\label{eq:F}
        F(x,p) = K^{ij}(x)p_ip_j+W(x)
    \end{equation}
    where $K^{ij}$ and $W$ depend on positions only. Without loss of generality, we assume that $K^{ij}=K^{ji}$.
\end{definition}

Note that in a second order system, any $F\in\mathcal F$ is a quadratic polynomial in momenta, and hence $\{H,F\}$ is a cubic polynomial in momenta. The following lemma is easily obtained via a direct computation.
\begin{lemma}
    In a second-order superintegrable system, for a function \eqref{eq:F}:
    \begin{enumerate}
        \item The coefficients $K_{ij}=g_{ia}g_{jb}K^{ab}$ are components of a Killing tensor for $g$, i.e.
        \begin{equation}\label{eq:Killing}
            K_{(ij,k)} = 0
        \end{equation}
        where comma denotes a covariant derivative with respect to the Levi-Civita connection of $g$, and where round brackets denote symmetrisation over the enclosed indices.
        \item The function $W$ satisfies
        \begin{equation}\label{eq:pre-BD}
            W_{,k} = K\indices{_{k}^{a}}V_{,a}
        \end{equation}
    \end{enumerate}
\end{lemma}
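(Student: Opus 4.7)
The statement asks us to extract two tensorial identities from a single scalar identity $\{H,F\}=0$, and the natural approach is a direct Poisson-bracket computation followed by separating the result into polynomial components in the momenta.

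My plan is to begin by splitting the bracket as
\[
    \{H,F\} = \{g^{ij}p_ip_j,\,K^{ab}p_ap_b\} + \{g^{ij}p_ip_j,\,W\} + \{V,\,K^{ab}p_ap_b\},
\]
using that $\{V,W\}=0$ since both depend on positions only. Computing each term from the canonical formula $\{f,g\}=\partial_{x^a}f\,\partial_{p_a}g-\partial_{p_a}f\,\partial_{x^a}g$, one finds that the first summand is a polynomial of degree three in the momenta, while the remaining two summands are linear in the momenta. Since $\{H,F\}$ must vanish identically as a function on $T^*M$ and since its cubic and linear-in-$p$ parts are independent as polynomials in $p$, each part must vanish separately. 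This is the key structural observation that lets us extract two conditions from one.

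For the cubic part, the vanishing of $\{g^{ij}p_ip_j,\,K^{ab}p_ap_b\}$ as a polynomial in $p$ is (by a standard calculation on $T^*M$) exactly the statement that $K^{ij}p_ip_j$ Poisson-commutes with the geodesic Hamiltonian. Converting partial to covariant derivatives via metric compatibility $g^{ij}{}_{;a}=0$, the naked partial derivatives reorganise so that the surviving coefficient is the fully symmetrised covariant derivative $K^{(ij;k)}$, yielding \eqref{eq:Killing} after lowering indices. This step is the most delicate of the computation, since one must be careful with Christoffel symbols; however, it is a familiar identification, and the tensorial character of the result is guaranteed by the fact that $\{H,F\}$ is a scalar.

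For the linear part, a short calculation gives
\[
    \{g^{ij}p_ip_j,\,W\} + \{V,\,K^{ab}p_ap_b\}
    = 2\bigl(-g^{ak}W_{,a}+K^{ak}V_{,a}\bigr)p_k,
\]
whose vanishing for all $p$ forces $g^{ak}W_{,a}=K^{ak}V_{,a}$. Lowering the index $k$ with $g_{k\ell}$ converts this into $W_{,k}=K_k{}^aV_{,a}$, which is exactly \eqref{eq:pre-BD}. The main obstacle here is purely bookkeeping — keeping track of index placement and of which derivatives are partial versus covariant — but no structural subtlety arises beyond the separation by momentum degree already used above.
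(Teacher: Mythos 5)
Your proof is correct and is precisely the ``direct computation'' that the paper invokes without writing out: splitting $\{H,F\}$ by homogeneous degree in the momenta, identifying the cubic part with the Killing equation and the linear part with $W_{,k}=K\indices{_k^a}V_{,a}$. No discrepancy with the paper's (implicit) argument.
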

For more details on this lemma see \cite{KSV2023}.\\
For brevity we will refer to tensor fields simply by citing the components, i.e.\  $K_{ij}$ satisfying~\eqref{eq:Killing} will be called a Killing tensor, and so forth.
Moreover, by a slight abuse of notation, we identify $K^{ij}$ with $K_{ij}$ etc.\ by virtue of the metric $g_{ij}$.
For ease of notation, we introduce the space
\begin{align}\label{definition:Kspace}
    \mathcal K = \{ K\indices{_{i}^{j}} : F_K=K^{ij}p_ip_j+W_K\in\mathcal F \}\,,
\end{align}
where we suggestively denote by $W_K$ the solution of~\eqref{eq:pre-BD} if the underlying potential or family of potentials $V$ is clear, assuming that the integrability condition of~\eqref{eq:pre-BD} holds, i.e.\ that
\begin{equation}\label{eq:Bertrand-Darboux}
    d(KdV) = 0
\end{equation}
is satisfied for the Killing tensor $K$, interpreted as an endomorphism, cf.\ \cite{bertrand_1857,darboux_1901}. This condition is called the \emph{Bertrand-Darboux condition} and the same as presented in \cite{KKM07,KSV2023}. 
Indeed, Equation~\eqref{eq:Bertrand-Darboux} is obtained by applying the differential to~\eqref{eq:pre-BD}, i.e.\ it is the Ricci identity for~$W$.
Conversely, \eqref{eq:Bertrand-Darboux} has to hold for any Killing tensor arising from a function $F\in\mathcal F$ of the form~\eqref{eq:F}.
The present paper is exclusively concerned with \emph{irreducible} systems.
\begin{definition}
    A second-order superintegrable system is called an \emph{irreducible system} if $\mathcal K$ forms an irreducible set of endomorphisms, i.e.\ such that the $K\indices{^i_j}$ do not have a common eigenvector.
\end{definition}

We also define \emph{non-degenerate systems}, adopting the definition in \cite{KMK2007,KSV2023}, although using a slightly different wording.
We then proceed to define \emph{semi-degenerate systems} analogously to~\cite{ERM17}, where the name was, to our knowledge, first coined, while it appears to first have been discussed already in~\cite{KKM07}, for $n=3$ under the name `three-parameter potential'. As semi-degenerate systems cannot exist in dimension~$2$, we consider dimensions $n\geq3$, see also the introduction of \cite{ERM17}.

\begin{definition}\label{defn:semi-degen}
    Let $(M,g)$ be a Riemannian manifold. Let $\mathcal K$ be a linear subspace in the space of rank-$2$ Killing tensors of $g$ and let $\mathcal V\subset\mathcal C^\infty(M)$ be a linear subspace in the space of functions on $M$, with a functionally linearly independent basis. We assume that $\mathcal K$ forms an irreducible set (of endomorphisms).
    \begin{enumerate}[label=(\roman*)]
        \item
            The tuple $(M,g,\mathcal K,\mathcal V)$ is called a \emph{non-degenerate system}, if for any $V\in\mathcal V$ and $K\in\mathcal K$, and with the Hamiltonian $F^{(0)}:=H=g^{ij}p_ip_j+V$,
            \begin{enumerate}[label=(N\arabic*)]
                \item $K$ and $V$ satisfy~\eqref{eq:Bertrand-Darboux},
                \item there are $2n-2$ elements $K^{(k)}\in\mathcal K$, $1\leq k\leq 2n-2$, such that $(F_{K^{(k)}})_{0\leq k\leq 2n-2}$ are a functionally independent set (\(F_K\) is as defined in \eqref{definition:Kspace}),
                \item $\dim\mathcal V=n+2$.
            \end{enumerate}
        \item\label{item:semi-deg}
            A tuple $(M,g,\mathcal K,\mathcal V)$ is called a \emph{semi-degenerate system}, if for any $V\in\mathcal V$ and $K\in\mathcal K$, and with the Hamiltonian $F^{(0)}:=H=g^{ij}p_ip_j+V$,
            \begin{enumerate}[label=(S\arabic*)]
                \item $K$ and $V$ satisfy~\eqref{eq:Bertrand-Darboux},
                \item there are $2n-2$ elements $K^{(k)}\in\mathcal K$, $1\leq k\leq 2n-2$, such that $(F_{K^{(k)}})_{0\leq k\leq 2n-2}$ are a functionally independent set (\(F_K\) is as defined in \eqref{definition:Kspace}),
                \item\label{item:semi-deg.S3} $\dim\mathcal V=n+1$.
                \item $\mathcal V$ cannot be extended such that $\dim \mathcal V = n+2.$
            \end{enumerate}
    \end{enumerate}
\end{definition}

\noindent A tuple \((M,g,\mathcal K, \mathcal V)\) that satisfies the conditions (S1), (S2), (S3), but not necessarily (S4), is going to be called a \emph{$(n+1)$-parameter system}, following~\cite{KKM07}. Such a system can be either semi-degenerate or extended to a non-degenerate one, depending on whether (S4) is satisfied or not. In Section~\ref{sec:main.result} we give a criterion encoding the data needed to make this distinction.
It has been shown in \cite{KSV2023} that, for an irreducible system, the potential $V$ satisfies
\begin{subequations}\label{eq:Wilczynski}
\begin{align}
    \label{eq:Wilczynski.1}
    V_{,ij} &= T\indices{_{ij}^a}V_{,a} + \frac1n g_{ij} \Delta V
    \\
    \label{eq:Wilczynski.2}
    \frac{n-1}{n} \left( \Delta V\right)_{,k} &= q\indices{_k^a}V_{,a}+\frac1n\,t_k\,\Delta V
\end{align}
where $T\indices{_{ij}^k}$ are components of a tensor field and where $t_k=T\indices{_{ak}^a}$ and
\[ q_{ij} = T\indices{^a_{ij,a}}+T\indices{_i^{ab}}T_{baj}-\mathrm{Ric}_{ij}\,. \]
\end{subequations}
Note that~\eqref{eq:Wilczynski} is a closed (prolongation) system for the potential $V$, i.e.\ an (analytic) solution is specified by $n+2$ constants specifying $V$, $V_{,k}$ and $\Delta V$ in a point on $M$.
Hence non-degeneracy is the maximal case of solutions. In the case of a $(n+1)$-parameter potential, we need to distinguish two possibilities:
\begin{enumerate}[label=(\Alph*)]
    \item There is a space $\mathcal V'$ of potentials, $\mathcal V\subset\mathcal V'$ with $\dim\mathcal V'=n+2$, such that $(M,g,\mathcal K,\mathcal V')$ is a non-degenerate system. We then say that the $(n+1)$-parameter system is \emph{extendable}.
    \item Otherwise we say that it is \emph{non-extendable}, and hence satisfies (S4).
\end{enumerate}
In the present paper, we are concerned with the first of these two cases, which we aim to describe similarly to the treatment of non-degenerate systems in~\cite{KSV2023,KSV2024}. We shall find a necessary and sufficient criterion allowing one to decide if a given $(n+1)$-parameter system is naturally extendable to a non-degenerate one.

\begin{remark}\label{rmk:semi-degeneracy.cond}
    For a $(n+1)$-parameter system there has to be a linear condition of the form
    \begin{equation}\label{eq:semi-degeneracy.condition}
        f^{(0)}\Delta V+\sum_{a=1}^nf^{(a)}V_{,a} = 0
    \end{equation}
    for the Laplacian $\Delta V$ and for the derivatives $V_{,a}$ of the potentials, where the coefficients $f^{(r)}$, $0\leq r\leq n$, depend on the point on $M$. Indeed, a given basis of $\mathcal V$ contains $n+1$ functions, which cannot all be functionally independent.
    We will argue now why, in \eqref{eq:semi-degeneracy.condition}, the coefficient $f^{(0)}$ can, without loss of generality be assumed to be $f^{(0)}=1$.
    To this end, we first note that in a neighborhood where $f^{(0)}\ne0$, we achieve $f^{(0)}=1$ after a division by $f^{(0)}$. If $f^{(0)}=0$ is true only on a zero set, we may omit this set.
    It hence remains to discuss the situation on neighborhoods where $f^{(0)}=0$, i.e.\ we consider a neighborhood where
     \[
        \sum_{a=1}^nf^{(a)}V_{,a} = 0
     \]
    holds. Differentiating once, we find, in that neighborhood,
    \[
        0 = \sum_{a=1}^n(f^{(a)}_{,k}V_{,a}+f^{(a)}V_{,ak})
        = \sum_{a=1}^n\left(
            f^{(a)}_{,k}V_{,a}
            +f^{(a)}T\indices{_{ak}^b}V_{,b}
            +\frac1n\,f^{(a)}g_{ak}\,\Delta V
        \right)\,.
     \]
     Next, introducing suitable functions $\mathfrak{f}^{(b)}_{(k)}$, $0\leq b\leq n$, on $M$,
     \[
        \mathfrak{f}^{(0)}_{(k)}\Delta V+\sum_{b=1}^n\mathfrak{f}^{(b)}_{(k)}V_{,b} = 0\,.
     \]
     Since $\mathfrak{f}^{(0)}_{(k)}=\sum_a\frac1nf^{(a)}g_{ak}$, we can find a linear combination $\mathfrak{f}^{(0)}=\sum_{k=1}^na_k\mathfrak{f}^{(0)}_{(k)}$ such that $\mathfrak{f}^{(0)}\ne0$ in some (possibly smaller) neighborhood, since otherwise all $\mathfrak{f}^{(0)}_{(k)}=0$ and then $f^{(a)}=0$ for all $1\leq a\leq n$, which is inconsistent with the hypothesis.
     Hence, without loss of generality, we assume that each potential $V$ of the $(n+1)$-parameter system satisfies
     \[
        \Delta V=s^kV_{,k}
     \]
     on the domain, where $\hat s=s^{k}\partial_k$ is a vector field on $M$. We denote its associated $1$-form by $s$.
\end{remark}

In \cite{KKM07}, it is shown (for the dimension $n=3$) that a semi-degenerate system will satisfy the Wilczynski equation~\eqref{eq:Wilczynski.nondeg}. As it turns out, this result is independent of dimension. A detailed proof is given in Section 5. Combining the statement in Remark \eqref{rmk:semi-degeneracy.cond} with Equation~\eqref{eq:Wilczynski.1}, we obtain the equation
\begin{align}\label{eq:Semidegen.Wilczynski}
    V_{,ij} = D\indices{_{ij}^m} V_{,m},
\end{align}
where we introduce
\begin{align}\label{eq:n+1.Wilczynski}
    D\indices{_{ij}^m} = T\indices{_{ij}^m} + \frac{1}{n} g_{ij} s^m\,.
\end{align}
We call \eqref{eq:n+1.Wilczynski} the \emph{$(n+1)$-parameter Wilczinski equation}. To avoid misunderstandings, we stress that the tensor $T$ in~\eqref{eq:n+1.Wilczynski} does, a priori, not satisfy the integrability conditions of a non-degenerate structure tensor. Indeed, a non-degenerate structure tensor necessarily decomposes according to
\[
        T_{ijk} = S_{ijk} + \bar t_ig_{jk} + \bar t_j g_{ik} - \frac2n\,g_{ij}\bar t_k
\]
where $\bar t_k$ are components of a $1$-form. However, the trace-free part of $T$ in~\eqref{eq:n+1.Wilczynski} is generally not completely symmetric.
Our main result here is that this, in fact, is the only obstruction on $T$ to satisfy the conditions of a non-degenerate structure tensor.

\begin{example}\label{ex:affine}
    Consider the equations \eqref{eq:Bertrand-Darboux} and~\eqref{eq:Semidegen.Wilczynski}. The simplest possible solution occurs, if \(D\indices{_{ij}^m} = 0\) for $i,j,m\in\{1,2,\dots,n\}$.
    In this case, solving~\eqref{eq:Semidegen.Wilczynski} for local coordinates $x=(x_1,\dots,x_n)$, we obtain the solution \(V = a_0 + \sum_{k=1}^n a_k x_k\), where the $a_k$ are constants. This is an \textit{affine function} $\mathbb R^n\to\mathbb R$. It is straightforwardly verified that~$V$ satisfies \eqref{eq:Bertrand-Darboux} for the Killing tensor fields generated by the symmetric products $dx^i\odot dx^j$, where \(i, j \in \{1, 2, \ldots, n\}\).
    These are the Killing tensors associated to the isotropic harmonic oscillator, which is a non-degenerate system with vanishing structure tensor, i.e.\ with \(T\indices{_{ij}^m}=0\).
    The affine function $V$ therefore defines a \((n+1)\)-parameter system that is extendable to the isotropic harmonic oscillator, which in this context is understood to be defined by the non-degenerate potential
    \begin{equation}\label{eq:non-deg.HO}
        a_0 + b\sum_{k=1}^n x_i^2+ \sum_{k=1}^n a_k x_k
    \end{equation}
    where the $a_k$ and $b$ are constants.
\end{example}
\begin{example}
    Consider~\eqref{eq:non-deg.HO}. In Example~\ref{ex:affine}, we considered the $(n+1)$-dimensional potential obtained from~\eqref{eq:non-deg.HO} via the restriction $b=0$.
    We now consider a different restriction of~\eqref{eq:non-deg.HO}, namely the one obtained by requiring \(a_1=0\), i.e.
    \begin{align*}
        \bar{V} = a_0 + b\sum_{k=1}^n x_i^2+ \sum_{k=2}^n a_k x_k.
    \end{align*}
    This is a $(n+1)$-parameter potential. For the components of the associated structure tensor, we substitute $V=\bar V$ into \eqref{eq:Semidegen.Wilczynski} and obtain that \(D\indices{_{ii}^1} = \frac{1}{x_1}\) where \(i \in \{1, \ldots, n\}\), and otherwise \(D\indices{_{ij}^m} = 0,\).
    Hence, the structure tensor associated to $\bar V$ is non-vanishing.
    Next, consider \eqref{eq:Wilczynski}. Substituting $\bar V$ for $V$, we find that \(T\indices{_{ij}^m} = 0\), as in Example~\ref{ex:affine}. This leads us to our first main result.
\end{example}

\section{Main results.}\label{sec:main.result}

We now formulate the main results of the paper, which are then proved in the remainder of the paper.

\subsection{Extendability of a (n+1)-parameter superintegrable system}

We introduce the so-called \emph{obstruction tensor}, inspired by the \textit{obstruction equation} in \cite{KKM07}, (which holds if and only if the obstruction tensor vanishes in \(n=3\))
\[
    N_{ijk} = \Pi_{(2,1)\circ}\ D_{ijk}
\]
where $\Pi_{(2,1)\circ}$ denotes the projector
\begin{align*}
    \Pi_{(2,1)\circ} M_{ijk} = \frac{1}{3}\left( 2\,M_{(ij)k} -M_{ikj} -M_{jki}\right)
    &+ \frac{2}{3(n-1)} \ g_{ij} m_k
    - \frac{2}{3(n-1)} \ g_{k(i} m_{j)}
\end{align*}
where
$ 
    m_j = M\indices{^i_{ji}} - M\indices{^i_{ij}}\,,
$ 
recalling that round brackets denote symmetrisation in enclosed indices.

\begin{remark}
    Consider the obstruction tensor $N_{ijk}$. We observe that, for dimension $n=2$ of the underlying manifold, $N=0$ holds for dimensional reasons. Indeed, observe that $N$ has hook symmetry and is trace-free. Such tensors are trivially zero in dimension $n=2$, which can easily be confirmed using the hook length formula. We hence re-obtain the following well-known fact about non-extendable $(n+1)$-parameter systems, c.f.~\cite{KKMI05} and~\cite{ERM17}.\footnote{We remark that in the reference~\cite{KKMI05}, the constant potential is not included in the count of the parameters.}\smallskip
    
    \noindent\textbf{\slshape Fact:}
        \emph{Non-extendable $(n+1)$-parameter systems exist only on manifolds of dimension \[ \dim(M)\geq3. \]}\\[-\baselineskip]
    \noindent This fact implies that a $2$-dimensional maximally superintegrable system with a three-parameter potential extends to one with a four-parameter potential. Non-extendable examples with a $(n+1)$-parameter potential are given in~\cite{ERM17}.
\end{remark}

\begin{theorem}\label{thm:main}
    Consider a $(n+1)$-parameter (second-order maximally superintegrable) system on a simply connected manifold of dimension $n\geq3$ with structure tensor $D_{ijk}$.
    It is a restriction of a non-degenerate superintegrable system with structure tensor
    \begin{equation}\label{eq:T.from.D}
        T\indices{_{ij}^k} = D\indices{_{ij}^k} - \frac1n\,g_{ij}\,D\indices{^a_{a}^k}
    \end{equation}
    if and only if the obstruction tensor $N$ vanishes.
\end{theorem}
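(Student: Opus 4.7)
The plan is to separate the statement into two implications and reduce each to an algebraic fact about the Young decomposition of $T$ (respectively $D$) together with a closure check for the prolongation system~\eqref{eq:Wilczynski}. The central algebraic input I will establish first is that $\Pi_{(2,1)\circ}$ annihilates (i)~any completely symmetric $3$-tensor and (ii)~each of the trace-type tensors $u_i g_{jk}$, $u_j g_{ik}$, and $g_{ij}u_k$ built from an arbitrary $1$-form $u$. Both facts are direct index computations: for symmetric tensors the bracketed expression vanishes by full symmetry and $m_j=0$, while for each trace term one checks that the bracketed contribution cancels exactly against the metric-correction term.

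For the necessity direction I assume that the $(n+1)$-parameter system extends to a non-degenerate system whose structure tensor is $T$. Tracing~\eqref{eq:Wilczynski.1} over $ij$ forces $T$ to be trace-free in its first two indices, so the stated decomposition $T_{ijk}=S_{ijk}+\bar t_ig_{jk}+\bar t_jg_{ik}-\tfrac{2}{n}g_{ij}\bar t_k$ (with $S$ completely symmetric) places every summand in $\ker\Pi_{(2,1)\circ}$, hence $N(T)=0$. Since $D-T=\tfrac{1}{n}g_{ij}s^k$ is again of "trace" type, also $N(D-T)=0$, and therefore $N(D)=0$.

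For the sufficiency direction I assume $N(D)=0$ and take $T$ as in~\eqref{eq:T.from.D}. Because $s^k=D\indices{^a_a^k}$, a direct trace gives $g^{ij}T\indices{_{ij}^k}=0$, and combined with $N(T)=N(D)=0$ this means the Young decomposition $T=T^{\mathrm{sym}}+T^{\mathrm{hook}}$ has trivial trace-free hook part; hence $T^{\mathrm{hook}}$ lies in the one-form-generated "trace hook" subspace. Absorbing the symmetric-trace portion of this expression into a new completely symmetric $S$, I recover $T=S+\bar t_ig_{jk}+\bar t_jg_{ik}-\tfrac{2}{n}g_{ij}\bar t_k$, i.e.\ the algebraic shape of a non-degenerate structure tensor. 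Moreover, equation~\eqref{eq:Wilczynski.1} holds automatically for every $V\in\mathcal V$, since
\[
V_{,ij}=D\indices{_{ij}^a}V_{,a}=T\indices{_{ij}^a}V_{,a}+\tfrac{1}{n}g_{ij}s^aV_{,a}=T\indices{_{ij}^a}V_{,a}+\tfrac{1}{n}g_{ij}\Delta V.
\]

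The remaining task is to verify that, with this $T$, the full prolongation~\eqref{eq:Wilczynski} closes on an $(n+2)$-dimensional solution space $\mathcal V'\supset\mathcal V$ and that the Killing tensors in $\mathcal K$ remain compatible with every $V\in\mathcal V'$. For the first point I would differentiate $\Delta V=s^aV_{,a}$, substitute $V_{,ak}=D\indices{_{ak}^b}V_{,b}$, and match the outcome with~\eqref{eq:Wilczynski.2}; this reduces to a tensorial identity among $T$, $q$ and $t_k$ that should follow from the decomposition of $T$ just established together with the differential identities already present in the $(n+1)$-parameter system. The Bertrand--Darboux condition~\eqref{eq:Bertrand-Darboux} then propagates from $\mathcal V$ to $\mathcal V'$ because it is $K$-dependent and linear in $V$, so its validity on the $(n+1)$-dimensional slice together with closedness of the prolongation forces it on the whole $(n+2)$-parameter family. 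I expect the main obstacle to be precisely this integrability check: matching the identity for $\Delta V_{,k}$ implied by the $(n+1)$-parameter structure with~\eqref{eq:Wilczynski.2} on the nose, since one must carefully track the contribution of the $\bar t$ component extracted from $T$ and verify no additional obstruction arises beyond the vanishing of $N$.
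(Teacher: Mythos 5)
Your necessity argument matches the paper's (Lemma~\ref{lem:necessity}): every summand of the non-degenerate decomposition of $T$ lies in $\ker\Pi_{(2,1)\circ}$, and $D-T$ is a pure trace term, so $N(D)=N(T)=0$. The sufficiency direction, however, has a genuine gap. Showing that $T$ has the \emph{algebraic shape} $S_{ijk}+\bar t_ig_{jk}+\bar t_jg_{ik}-\tfrac2n g_{ij}\bar t_k$ is not enough to make it a non-degenerate structure tensor: one must additionally prove that the trace $1$-form $\bar t$ (equivalently $t_k=T\indices{_{ka}^a}$, equivalently $s_k=D\indices{^a_{ak}}$, since $d_k$ is closed by the first trace of \eqref{eq:V-prolong}) is \emph{closed}. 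This is precisely what is needed for the integrability coefficient $(*4)$, the antisymmetrised part of $t_{k,l}+q_{kl}$, to vanish; without it the prolongation \eqref{eq:Wilczynski.semi} does not close and no $(n+2)$-dimensional solution space exists. The paper flags exactly this danger in the remark following Lemma~\ref{lem:necessity}, and the decisive computation is Lemma~\ref{lem:sformula}: tracing \eqref{eq:V-prolong} over $i,j$ and skewing expresses $\tfrac{n-2}{n-1}\,ds$ as an explicit linear expression in $N$ and $\nabla N$, so $N=0$ forces $ds=0$ and then $dt=0$ via Lemma~\ref{lem:tsclosed}. Your proposal never produces this; the phrase ``differential identities already present in the $(n+1)$-parameter system'' is where the entire work lies, and you yourself identify it as the expected obstacle without resolving it.

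Two further points. First, the coefficients $(*1)$ and $(*3)$ do not vanish by a tensorial identity in $T$, $q$, $t$; they vanish because, once $(*2)$ and $(*4)$ are identically zero, the integrability conditions reduce to a tensor contracted with $V_{,m}$ vanishing for every $V$ in the $(n+1)$-parameter family, and by \ref{item:semi-deg.S3} the first derivatives span pointwise. Second, your propagation of the Bertrand--Darboux condition to $\mathcal V'$ by ``linearity plus validity on the $(n+1)$-dimensional slice'' is not a valid inference as stated: a linear condition holding on a hyperplane of a vector space need not hold on the whole space. The correct route (Section~\ref{sec:sufficiency.killing}) is to substitute the Wilczynski equation into \eqref{eq:Bertrand-Darboux}, observe that the $s$-terms cancel so the condition becomes a pointwise linear condition on $V^{,a}$ alone with a coefficient built from $\nabla K$ and $T$, conclude that this coefficient vanishes identically because the $V^{,a}$ arising from $\mathcal V$ already span each cotangent space, and then note that the same vanishing coefficient governs every $V\in\mathcal V'$.
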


This theorem is proven below, and shows that (non-)extendability of a \((n+1)\)-parameter system is characterised by the condition $N=0$ ($N \ne 0$).
\medskip

For non-degenerate systems, we investigate the conditions that specify a $(n+1)$-dimensional subspace of potentials. This restriction is encoded in a $1$-form $s$ that satisfies an overdetermined system of partial differential equations. Its general solution depends on $n+1$ integration constants, reflecting the choice of a $(n+1)$-dimensional subspace in the space of potentials associated to the non-degenerate system. Details are given in Section~\ref{sec:restricting.condition}.
\bigskip

The next main result of the paper is the extension of Theorem~\ref{thm:main} for conformally superintegrable systems, i.e.\ the generalization of Theorem~\ref{thm:main} to conformal superintegrability.
A conformally (second-order maximally) superintegrable system is a Hamiltonian~\eqref{eq:Hamiltonian} together with $2n-2$ functions $F^{(\alpha)}:T^*M\to\R$, $1\leq\alpha\leq 2n-2$, such that
\begin{equation}\label{eq:integral.conformal}
    \{H,F^{(\alpha)}\} = 2\varrho^{(\alpha)}\,H
\end{equation}
for all $1\leq\alpha\leq 2n-2$ (where the $\varrho^{(\alpha)}:T^*M\to\R$ are polynomial in momenta) and such that
$(F^{(\alpha)})_{0\leq\alpha\leq 2n-2}$
are functionally independent.
Non-degeneracy, semi-degeneracy and the $(n+1)$-parameter property are then defined analogously to the previously discussed \emph{properly} superintegrable case (precise definitions will be given later). We obtain, similarly, the conformal Wilczynski equations for \(T_{ijk}\) and \(D_{ijk},\) namely,
\begin{align}
    V_{,ij} &= T\indices{_{ij}^m} V_{,m} + \frac{1}{n} g_{ij} \Delta V + \tau_{ij} V\\
    V_{,ij} &= D\indices{_{ij}^m} V_{,m} + \eta_{ij} V
\end{align}
in analogy to \eqref{eq:Wilczynski}, \eqref{eq:Semidegen.Wilczynski}, respectively.
For conformally superintegrable systems, we find the following generalization of Theorem~\ref{thm:main}:

\begin{theorem}\label{thm:main.conformal}
    Consider a $(n+1)$-parameter (second-order maximally conformally superintegrable) system on a simply connected manifold of dimension $n\geq3$ with structure tensors $D_{ijk}$ and $\eta_{ij}$.
    It is the restriction of a non-degenerate conformally superintegrable system with structure tensors 
    \begin{equation}
        T\indices{_{ij}^k} = D\indices{_{ij}^k} - \frac1n\,g_{ij}\,D\indices{^a_{a}^k}
    \end{equation}
    and
    \begin{equation}
        \tau_{ij} = \eta_{ij}-\frac1n\,g_{ij}\,\mathrm{tr}_g(\eta)
    \end{equation}
    if and only if the obstruction tensor $N$ vanishes.
\end{theorem}

The proof of Theorem~\ref{thm:main.conformal} is given in Section~\ref{sec:conformal} and is based on Theorem~\ref{thm:main}.
The conformal invariance of the tensor field $N$ is another crucial ingredient.

\subsection{Interpretation in terms of statistical manifolds with torsion}

We interpret the result of Theorems~\ref{thm:main} and~\ref{thm:main.conformal} geometrically in terms of so-called \emph{statistical manifolds with torsion}.
These are geometric structures with roots in statistics. Applications of statistical manifolds (often with vanishing torsion) include thermodynamics, biology and machine learning, for instance \cite{Shima,Nielsen,MJH2023,Frank2009,AJLS2017}.

\begin{definition}[\cite{Matsuzoe2010}]\label{defn:statistical.manifold}
    A triple $(M,g,\mathcal D)$ consisting of a smooth manifold $M$ with Riemannian metric $g$, and a connection $\mathcal D$ on $\mathfrak X(M)$, is called a \emph{statistical manifold with torsion} if
    \[
        (\mathcal D_Xg)(Y,Z)-(\mathcal D_Yg)(X,Z)+g(\mathcal T(X,Y),Z)=0,
    \]
    where $\mathcal T$ is the torsion of $\mathcal D$.
\end{definition}

We recall that an affine connection is said to \emph{have vectorial torsion} if its torsion $\mathcal T\in\Gamma(\Omega^2(T^*M)\otimes TM)$ is determined by its trace $\mathrm{tr}(\mathcal T)$.

We recall the decomposition
\begin{equation}\label{eq:decompose}
        D_{ijk} = S_{ijk} +N_{ijk}+\frac1n\,g_{ij}s_k+ \bar t_ig_{jk} + \bar t_j g_{ik} - \frac2n\,g_{ij}\bar t_k,
\end{equation}
which allows us to reformulate the necessary and sufficient condition of Theorem~\ref{thm:main.conformal} as follows.

\begin{corollary}\label{cor:torsion}
\begin{enumerate}[label=(\roman*)]
    \item For a $(n+1)$-parameter conformal system with structure tensor $D$ decomposed as in \eqref{eq:decompose}, let
    \[
        \mathcal D=\nabla+D.
    \]
    Its $g$-dual connection $\mathcal D^*$,
    \[
        Z(g(X,Y))=g(\mathcal D_ZX,Y)+g(X,\mathcal D^*_ZY),
    \]
    has vectorial torsion if and and only if the underlying $(n+1)$-parameter system is extendable.
    \item For a $(n+1)$-parameter conformal system with structure tensor $D$ decomposed as in \eqref{eq:decompose}, let
    \begin{equation}\label{eq:good.connection}
        \mathcal D=\nabla+D-\frac1ng\otimes s^\sharp+\frac{n+2}{n}\,g\otimes t^\sharp,
    \end{equation}
    and consider its $g$-dual connection $\mathcal D^*$.
    Then $(M,g,\mathcal D^*)$ defines a statistical manifold with torsion $\mathcal T$, given by
    \[
        g(\mathcal T(X,Y),Z) = g(N(X,Z),Y)-g(N(Y,Z),X).
    \]
    The underlying $(n+1)$-parameter conformal system is extendable if and only if the torsion $\mathcal T$ vanishes.
    %
\end{enumerate}
\end{corollary}
We observe that $\mathcal D$ ``forgets'' about the trace component $s$ of $D$, and it therefore encapsulates the same data as the structure tensor $T$ determined by \eqref{eq:Wilczynski} (analogously for its conformal counterpart).

The corollary allows us to differentiate $(n+1)$-parameter second-order maximally (conformally) superintegrable systems by the torsion of $\mathcal D$:
\begin{itemize}
    \item If the torsion of $\mathcal D$ is zero for a $(n+1)$-parameter system, then the system extends to a non-degenerate system.
    \item If the torsion of $\mathcal D$ is non-zero for a $(n+1)$-parameter system, then the system is a semi-degenerate system.
\end{itemize}
We also observe that for this distinction, the data of the structure tensor $T$ is sufficient. The data encoded in the tensor fields $s^k=g^{ij}D\indices{_{ij}^k}$ and $\mathrm{tr}_g(\eta)$ is not needed.

\subsection{Structure of the paper}

The paper is organised as follows; after introducing some notation, and reviewing some facts about non-degenerate systems in Sections~\ref{sec:young} and~\ref{sec:non-deg}, we discuss the structural equations of a $(n+1)$-parameter potential in Section~\ref{sec:semi-deg}.
In Section~\ref{sec:extendable}, we prove Theorem~\ref{thm:main}.
In Section~\ref{sec:conformal}, we prove Theorem~\ref{thm:main.conformal}.
Section~\ref{sec:torsion.interpretation} interprets these results in terms of the torsion of the underlying statistical manifold structure, cf.\ Corollary~\ref{cor:torsion}.
We conclude the paper in Section~\ref{sec:discussion} with a discussion of the results and an explicit non-extendable example, putting our results into context.

\section{Prerequisites}

\subsection{Young symmetrisers}\label{sec:young}
We introduce Young tableaux, which allow us to denote symmetry projections in a concise and efficient manner, see also~\cite{KSV2023,KSV2024}.
A Young diagram is a collection of (finitely many) boxes that are arranged (from left to right) such that the length of each line is non-increasing from top to bottom. 
Young tableaux are Young diagrams filled by symbols (here: index names). They are useful tools in the representation theory of symmetric and special orthogonal groups. Instead of giving a comprehensive review of Young diagrams and Young tableaux, we shall only introduce the special cases that we are going to use here.

A Young symmetriser, for instance, is a Young tableau consisting of a single row.
We use Young symmetrisers to denote complete symmetrisation in the indices indicated in the tableau, for instance
\[
	\young(ij) \ S_{ijk}
	=S_{ijk}
	+S_{jik}.
\]
Note that the Young symmetriser corresponds to the associated symmetry projector up to a factor of $2$. More precisely, the symmetry projector associated to $\young(ij)$ is $\frac12\,\young(ij)$.
Similarly, a single-column Young tableau denotes complete antisymmetrisation, e.g.
\[
	\young(i,j) \ A_{ijk}
	=A_{ijk}
	-A_{jik}.
\]
A third type of Young tableau is going to be used here, namely the \emph{hook symmetriser} denoted by a Young tableau whose first row extends to the right, while the remaining boxes are confined to the first column, such as in the following example:
\[
	\young(ji,k) \
	T_{ijk}
	=
	\young(ji) \ 
	\young(j,k) \
	T_{ijk}
	=
	\young(ji) \ 
	(T_{ijk}-T_{ikj})
	=T_{ijk}-T_{ikj}
	+T_{jik}-T_{jki}.
\]
Note that such a hook symmetriser is to be read as follows: we first have to apply the Young antisymmetriser given by the first column of the tableau, followed by the application of the Young symmetriser given by the first row of the tableau.

We decorate Young symmetrisers by a subscript $\circ$ in order to refer to the totally trace-free component of the tensor obtained after the respective (not normalised) symmetry projection.
For example, the projection $\Gamma(\mathrm{Sym}_2(T^*M)\otimes T^*M)\to\Gamma(\mathrm{Sym}^3_\circ(T^*M))$ onto the totally symmetric and trace-free part is denoted by $\frac16 \ {\yng(3)}_{\circ}$.

\subsection{Review of non-degenerate systems}\label{sec:non-deg}

Consider a non-degenerate system in dimension $n\geq3$, for which~\eqref{eq:Wilczynski.1} holds. Non-degenerate systems have been investigated thoroughly in \cite{KSV2023}. We review some of their properties, discussed in this reference, confining ourselves to properties relevant for our later discussion of semi-degenerate systems.

First, it is shown in~\cite{KSV2023} that the structure tensor $T\indices{_{ij}^k}$ is determined by the space $\mathcal K$ of Killing tensor fields (respectively, their associated endomorphisms), c.f.~\eqref{eq:Wilczynski}. Moreover, in the reference the integrability conditions of \eqref{eq:Wilczynski} are obtained. These are the Ricci conditions for $V$,
\begin{align*} 
    R\indices{^b_{ijk}} V_{,b} &= V_{,ijk}-V_{,ikj}\,,
    &
    0 &= (\Delta V)_{,kl}-(\Delta V)_{,lk}\,,
\end{align*}
where we may replace second (and third) derivatives of $V$ using \eqref{eq:Wilczynski}.
For a concise notation, we are from now on going to denote antisymmetrisation (in the indicated indices) by the corresponding Young diagram, such that
\begin{align}\label{eq:integrability.non-deg.V.young}
    &\young(j,k) \ V_{,ijk} = R\indices{^m_{ijk}} V_{,m}\,,
    &
    &\young(k,l)  \left( \Delta V \right)_{,kl} = 0\,.
\end{align}
Moreover, we introduce the abbreviations
\begin{align*}
    Q\indices{_{ijk}^m} &:= T\indices{_{ij}^m_{,k}} + T\indices{_{ij}^l} T\indices{_{lk}^m} - R\indices{_{ijk}^m}\,,
    &
    q\indices{_j^m} &:= Q\indices{_{ij}^{im}}\,,
\end{align*}
following \cite{KSV2023}.
The integrability conditions~\eqref{eq:integrability.non-deg.V.young} thus take the form
\begin{align}
        \label{eq:original_integ_conditions.1}
		\underbrace{ \young(j,k) \left( Q\indices{_{ijk}^m}+\frac1{n-1}g_{ij}q\indices{_k^m}\right) }_{(\#1)}V_{,m}
		+\frac1n \ \underbrace{ \young(j,k)
		\left(T_{ijk}+\frac1{n-1}g_{ij}t_k\right) }_{(\#2)}\Delta V&=0\\
	\intertext{and, respectively,}
        \label{eq:original_integ_conditions.2}
		\underbrace{ \young(k,l) \left(q\indices{_k^n_{,l}}+T\indices{_{ml}^n}q\indices{_k^m}+\frac1{n-1}t_kq\indices{_l^n}\right) }_{(\#3)}V_{,n}+\frac1n \ \underbrace{ \young(k,l) \left(t_{k,l}+q_{kl}\right) }_{(\#4)}\Delta V
		&=0.
\end{align}
where each coefficient of $V_{,i}$ and $\Delta V$ has to vanish independently.
For example, it follows from $(\#2)$ that the structure tensor \(T\) can be decomposed as
\begin{equation}\label{eq:decomposition.T.non-deg}
    T_{ijk} = S_{ijk} + \bar t_{i} g_{jk} + \bar t_{j} g_{ik} -\frac2n g_{ij} \bar t_{k}
\end{equation}
where
$$ \bar t_i=\frac{n}{(n-1)(n+2)}\,T\indices{_{ia}^a}\,. $$
Next, $(\#4)$ implies that the 1-form $\bar t_i$ is closed, since $q_{ij}$ is symmetric.
Since we work on a simply connected manifold, we thus have that $\bar t_i$ is exact, $\bar t_i=\bar t_{,i}$ for a function $\bar t$, c.f.~\cite{KSV2023}.
Utilising the decomposition of $T$, it is immediately obtained that
\begin{equation}\label{eq:linear.condition.nondeg}
    \Pi_{(2,1)\circ}\,T_{ijk} = 0\,,\quad \text{i.e.\qquad~$N=0$,}
\end{equation}
where we recall the projection operator introduced earlier.
As we shall explain more thoroughly below, this is the crucial condition that a $(n+1)$-parameter system has to satisfy in order to be naturally extendable to a non-degenerate one. We will be able to make this more precise later.

\subsection{The structure tensor of a \texorpdfstring{$(n+1)$}{(n+1)}-parameter system}\label{sec:semi-deg}

Here we discuss some general properties of $(n+1)$-parameter systems and other preliminaries.
Recall the definition of a $(n+1)$-parameter potential (see below Definition \eqref{defn:semi-degen}). It hence follows, for a $(n+1)$-parameter system, that first derivatives \(V_{,m}\) can be chosen to be linearly independent functions and hence the structure tensor \(D\indices{_{ij}^k}\) of the $(n+1)$-parameter system must be uniquely determined by the space $\mathcal K$ of compatible Killing tensors (c.f.~the analogous reasoning in \cite{KSV2023}), such that
\begin{align}
    V_{,ij} = D\indices{_{ij}^m} V_{,m}. \label{eq:SWEQ}
\end{align}
for a $(1,2)$-tensor field $D\in\Gamma(\mathrm{Sym}_2(T^*M)\otimes TM)$ that we are going to call the $(n+1)$-parameter system's \emph{structure tensor}. We use the same terminology for the associated $(0,3)$-tensor field, which we denote by the same symbol, recalling our earlier convention.
In order to proceed, we decompose $D$ into irreducible parts with respect to the natural action of $\mathrm{GL}_n$.
Since $D$ is symmetric in its first two indices, we have to decompose $\yng(2)\otimes\yng(1)$ into irreducible $\mathrm{GL}_n$-representations:
\[
        \yng(2)\otimes\yng(1)
        \cong
        {\yng(3)}_{\ \circ}\oplus\yng(1)\oplus{\yng(2,1)}_\circ\oplus\yng(1)\,,
\]
where each {\tiny $\yng(1)$} stands for one of the two independent trace components of $D$, and where (as always) $\circ$ denotes the totally trace-free part with respect to the obvious metric.
We identify the component {\tiny ${\yng(2,1)}_\circ$} to be determined by the obstruction tensor $N=\Pi_{(2,1)\circ} D$, and {\tiny ${\yng(3)}_\circ$} by~$S$. 
The two independent trace components are given by
\begin{align*}
    d_k &= D\indices{_{ka}^a}\,,
    &
    s_k &= D\indices{^a_{ak}}\,.
\end{align*}
We hence have the decomposition of $D$ as follows:
\begin{align}\label{eq:D-decomp}
    D_{ijk} = S_{ijk} + N_{ijk} &+ \frac{n}{(n+2)(n-1)} \left( d_i g_{jk} + d_j g_{ik} - \frac{2}{n} g_{ij} d_k \right)
    \nonumber \\
    &- \frac{1}{(n+2)(n-1)} \left( s_i g_{jk} + s_j g_{ik} - (n+1) g_{ij} s_k \right).
\end{align}
Analogously to~\cite{KSV2023}, we now aim to investigate the integrability conditions for \eqref{eq:SWEQ}.
To this end, we compute the covariant derivative of \eqref{eq:SWEQ}, finding
\begin{align}
    V_{,ijk} = \left( D\indices{_{ij}^m_{,k}} + D\indices{_{ij}^a} D\indices{_{ak}^m} \right) V_{,m},
\end{align}
the Ricci identity for this equation yields
\begin{align}
    R\indices{^m_{ijk}} V_{,m} = V_{,ijk} - V_{,ikj} = \young(j,k) \left( D\indices{_{ij}^m_{,k}} + D\indices{_{ij}^a} D\indices{_{ak}^m} \right) V_{,m}.
\end{align}
We hence conclude
\begin{align}
    \young(j,k) \left( D\indices{_{ij}^m_{,k}} + D\indices{_{ij}^a} D\indices{_{ak}^m} - R\indices{_{ijk}^m} \right) V_{,m} = 0,
\end{align}
and, by the assumption that the derivatives \(V_{,m}\) be chosen linearly independently,
\begin{align}\label{eq:V-prolong}
    \young(j,k) \left( D\indices{_{ij}^m_{,k}} + D\indices{_{ij}^a} D\indices{_{ak}^m} - R\indices{_{ijk}^m} \right) = 0,
\end{align}
for all \(m\). This is our first major integrability condition for a $(n+1)$-parameter potential.
Before discussing in more detail the properties of the tensor field $D$, we would like to briefly comment on the relationship of~\eqref{eq:Bertrand-Darboux} and~\eqref{eq:SWEQ}. Note that we obtained~\eqref{eq:SWEQ} as a partial solution of~\eqref{eq:Bertrand-Darboux}. Resubstituting this partial solution into~\eqref{eq:Bertrand-Darboux}, we will later obtain an equation of the form
\begin{equation}\label{eq:shortcut.raw}
    \nabla_kK_{ij} = P\indices{_{ijk}^{ab}}K_{ab},
\end{equation}
where $P\indices{_{ijk}^{ab}}$ is a tensor field, analogous to the case of non-degenerate systems, c.f.~\cite{KSV2023}. Equation~\eqref{eq:shortcut.raw} together with~\eqref{eq:SWEQ} then imply~\eqref{eq:Bertrand-Darboux}.

We return to our main discussion, now addressing some useful properties of the tensor field $D$.
Since we work on a simply connected manifold, the next lemma will allow us to write the trace $d_k$ as differential of a function, $d_k=d_{,k}$.
\begin{lemma}
    The trace $d_k=D\indices{_{ka}^a}$ is closed (and hence locally exact).
\end{lemma}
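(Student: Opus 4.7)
The plan is to derive the closedness of the 1-form $d_k$ directly from the integrability condition \eqref{eq:V-prolong} by taking a suitable trace. Concretely, I would contract the lower index $i$ with the upper index $m$ in that equation. Since $D$ is symmetric in its first two lower indices, one has $D\indices{_{ij}^i} = g^{ia}D_{ija} = g^{ia}D_{jia} = d_j$, so the derivative piece $\young(j,k)\,D\indices{_{ij}^m_{,k}}$ collapses to $d_{j,k} - d_{k,j}$. The goal then becomes showing that the two remaining contributions---the quadratic term and the curvature term---cancel after the same contraction.

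The curvature contribution should vanish almost for formal reasons. After antisymmetrising in $j,k$ and contracting $i$ with $m$ one obtains $R\indices{_{ikj}^i} - R\indices{_{ijk}^i}$; using the standard Riemann symmetries (antisymmetry in the second pair together with the pair-exchange symmetry) each term reduces to $\pm\mathrm{Ric}_{jk}$, and the combination therefore vanishes by symmetry of the Ricci tensor.

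The main substantive step is disposing of the quadratic piece $D\indices{_{ij}^a}D\indices{_{ak}^i} - D\indices{_{ik}^a}D\indices{_{aj}^i}$. I would write this as $f(j,k)-f(k,j)$ where $f(\beta,\delta) := D\indices{_{\alpha\beta}^\gamma}D\indices{_{\gamma\delta}^\alpha}$, and then show that $f$ is symmetric in its two free indices by swapping the dummy indices $\alpha\leftrightarrow\gamma$ and applying the symmetry of $D$ in its first two lower slots twice (once on each factor). This is essentially combinatorial, but it is the step that requires the most careful bookkeeping and is the place where any sign or index mistake would spoil the argument; it is also the only place where the symmetry of $D$ in its first two indices is used in an essential way.

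Combining the three observations, the contracted form of \eqref{eq:V-prolong} reduces to $d_{j,k} - d_{k,j} = 0$, so $d_k\,dx^k$ is a closed 1-form on $M$. Since $M$ is assumed simply connected, Poincare's lemma yields a smooth function $d$ with $d_k = d_{,k}$, as claimed.
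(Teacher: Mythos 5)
Your proposal is correct and follows essentially the same route as the paper: contract $i$ with $m$ in \eqref{eq:V-prolong}, cancel the curvature terms by the symmetry of the Ricci tensor, and cancel the quadratic terms by relabelling the dummy indices, then invoke simple connectedness for exactness. (One minor remark: the quadratic cancellation needs only the dummy swap $\alpha\leftrightarrow\gamma$ and commutativity of the scalar factors, not the symmetry of $D$ in its first two slots, which is instead what collapses $D\indices{_{ij}^i}$ to $d_j$.)
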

\begin{proof}
    By taking the trace in \(i,m\) in~\eqref{eq:V-prolong}, we find
\begin{align}\label{eq:dclosedproof1}
    0 &= d_{j,k} + D\indices{_{ij}^a} D\indices{_{ak}^i} - R_{jk} - d_{k,j} - D\indices{_{ik}^a} D\indices{_{aj}^i} + R_{kj}.
\end{align}
Notice that, in the expression \( D\indices{_{ik}^a} D\indices{_{aj}^i}\), both $a$ and $i$ are dummy variables. Switching index names, we thus see that
\begin{align*}
    D\indices{_{ik}^a} D\indices{_{aj}^i}
    &= D\indices{_{ak}^i} D\indices{_{ij}^a}
    = D\indices{_{ij}^a} D\indices{_{ak}^i}.
\end{align*}
Therefore, \eqref{eq:dclosedproof1} can be simplified to
\begin{align*}
    0 &= d_{j,k} + D\indices{_{ij}^a} D\indices{_{ak}^i} - R_{jk} - d_{k,j} - D\indices{_{ik}^a} D\indices{_{aj}^i} + R_{kj}
    \\
    &= d_{j,k} + D\indices{_{ij}^a} D\indices{_{ak}^i} - d_{k,j} - D\indices{_{ij}^a} D\indices{_{ak}^i}
    = d_{j,k} - d_{k,j},
\end{align*}
and thus $d_k$ is closed.
\end{proof}

For the main claim, Theorem~\ref{thm:main}, we will also need the closedness of the $1$-form $t_k=T\indices{_{ka}^a}$ for the tensor $T$ defined in \eqref{eq:T.from.D}.
For this purpose we first prove the following auxiliary lemma.
\begin{lemma}\label{lem:tsclosed}
    For a $(n+1)$-parameter system, the trace $t_k$ is closed, if and only if $s_k$ is closed.
\end{lemma}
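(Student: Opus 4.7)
The plan is straightforward: simply unpack the definition~\eqref{eq:T.from.D} of $T$ in terms of $D$, take the appropriate trace, and compare with the already-established closedness of $d_k$. The symmetry $D_{ijk} = D_{jik}$ of the structure tensor in its first two indices implies
\[
    D\indices{^a_{a}^k} = g^{am}g^{kn} D_{man} = g^{kn}g^{am} D_{amn} = g^{kn}\, s_n = s^k,
\]
so~\eqref{eq:T.from.D} becomes $T\indices{_{ij}^k} = D\indices{_{ij}^k} - \tfrac{1}{n} g_{ij}\, s^k$. Tracing over the second lower index and the upper index yields the key identity
\[
    t_k = T\indices{_{ka}^a} = D\indices{_{ka}^a} - \tfrac{1}{n} g_{ka}\, s^a = d_k - \tfrac{1}{n}\, s_k.
\]

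With this identity in hand, the conclusion is immediate. By the previous lemma, the $1$-form $d_k$ is closed, so
\[
    t_{[k,l]} = d_{[k,l]} - \tfrac{1}{n}\, s_{[k,l]} = -\tfrac{1}{n}\, s_{[k,l]},
\]
which means $dt = 0$ if and only if $ds = 0$.

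I expect no real obstacle here: all the substantive work has already been carried out in proving the closedness of $d_k$. The only small point requiring care is the bookkeeping of index positions in the two distinct traces $d_k$ and $s_k$ of $D$, and the observation that the particular trace appearing in the definition of $T$ is the one that matches $s^k$ thanks to the $D_{(ij)k} = D_{ijk}$ symmetry. Once the identity $t_k = d_k - \tfrac{1}{n} s_k$ is written down, the lemma reduces to a one-line consequence of the preceding result.
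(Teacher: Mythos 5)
Your proof is correct and follows essentially the same route as the paper: both derive the identity $t_k = d_k - \tfrac{1}{n}s_k$ by tracing~\eqref{eq:T.from.D} and then invoke the closedness of $d_k$ from the preceding lemma to conclude $dt = -\tfrac{1}{n}\,ds$. Nothing is missing.
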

\begin{proof}
    Introducing $T_{ijk}$ via formula \eqref{eq:T.from.D}, and then taking the trace in \(i,j\), we obtain
    \begin{align}
        t_j = d_j - \frac{1}{n} s_j.
    \end{align}
    Taking the covariant derivative of this equation,
    \begin{align}
        t_{j,k} = d_{j,k} - \frac{1}{n} s_{j,k},
    \end{align}
    furthermore, skewing this equation in \(j,k\), we find
    \begin{align}
        \young(j,k) \ t_{j,k} = \frac{1}{n} \ \young(j,k) \ s_{k,j}
    \end{align}
    as we have already shown that \(d\) is closed.
    The claim follows.
\end{proof}

Coming back to Theorem~\ref{thm:main}, it hence remains to prove that $s$ is a closed $1$-form in the situation of Theorem~\ref{thm:main}. This is ensured by the following lemma.

\begin{lemma}\label{lem:sformula}
    In a $(n+1)$-parameter system, the $2$-form $ds$ satisfies the formula
    \begin{multline}\label{eq:ds}
        \left( \frac{n-2}{n-1} \right) \young(k,l) \ s_{l,k}
        = \young(k,l) \bigg(
            N\indices{^m_{kl}_{,m}}
            +S\indices{_k^{jm}}(N_{mjl} - N_{mlj})
            \\
            -\frac{n+1}{n+2}\,s^m N_{mkl}
            + \frac{2 n}{(n+2)^2 (n-1)} d^m N_{mkl}
        \bigg)
    \end{multline}
\end{lemma}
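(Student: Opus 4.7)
The plan is to extract a formula for $ds$ from the integrability condition~\eqref{eq:V-prolong}. Since $s^m = g^{ij}D\indices{_{ij}^m}$, tracing~\eqref{eq:V-prolong} with $g^{ij}$ produces $\nabla s$ naturally, and the curvature contribution $g^{ij}R\indices{_{ijk}^m}$ vanishes by the skew-symmetry of $R$ in its first two indices. After rearranging, one obtains
\[
    \nabla_k s^m
    = \nabla^j D\indices{_{jk}^m}
      + g^{ij} D\indices{_{ik}^a} D\indices{_{aj}^m}
      - R\indices{_k^m}
      - s^a D\indices{_{ak}^m}\,.
\]
Lowering $m\to l$ and antisymmetrising in $(k,l)$ eliminates the symmetric Ricci tensor and leaves
\[
    \young(k,l)\ s_{l,k}
    = \young(k,l) \left[
        \nabla^j D_{jkl}
        + g^{ij} D\indices{_{ik}^a} D_{ajl}
        - s^a D_{akl}
      \right].
\]

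Next, I substitute the irreducible decomposition~\eqref{eq:D-decomp} into each of the three contributions. In the divergence $\young(k,l)\ \nabla^j D_{jkl}$ the totally symmetric $S$-piece drops out because $\nabla^j S_{jkl}$ is symmetric in $k,l$; the $d$-trace piece drops out because $d_k$ was just shown to be closed; and a short calculation shows that the $s$-trace piece contributes exactly $\tfrac{1}{n-1}\,\young(k,l)\ s_{l,k}$. Moving this contribution to the left-hand side produces the overall factor $\tfrac{n-2}{n-1}$ appearing in the statement. The linear term $\young(k,l)\ s^a D_{akl}$ unpacks into an $N$-contribution proportional to $s^a(N_{akl}-N_{alk})$ together with trace remnants of schematic form $\tfrac{1}{n-1}(s_l d_k - s_k d_l)$, while the $s\wedge s$ piece vanishes trivially by antisymmetry in $k,l$.

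The heart of the argument is the quadratic term $\young(k,l)\ g^{ij} D\indices{_{ik}^a} D_{ajl}$, whose decomposition into the four pieces $S$, $N$, $d$-trace and $s$-trace of both $D$-factors produces on the order of sixteen cross-terms. Several vanish immediately: the $S\cdot S$ product is symmetric in $k,l$, and the pure $d\cdot d$ and $s\cdot s$ combinations likewise become symmetric after contraction. The mixed $d\cdot s$ contribution cancels precisely the leftover $(s_l d_k - s_k d_l)$-term produced by the linear part, which is a strong internal consistency check. The surviving cross-terms all contain a factor of $N$; organising them by means of the hook identity $N_{ijk}+N_{jki}+N_{ikj}=0$ (equivalently $N_{(ijk)}=0$) together with the total symmetry and total trace-freeness of $S$ and $N$, they collapse into exactly the four combinations $\nabla^m N_{mkl}$, $S\indices{_k^{jm}}(N_{mjl}-N_{mlj})$, $s^m N_{mkl}$ and $d^m N_{mkl}$ of~\eqref{eq:ds}.

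The main obstacle is the systematic bookkeeping of these sixteen cross-terms and the correct identification of how each $N$-contraction arises. The essential tools are the hook identity, the trace-freeness of $S$ and $N$, and the closedness of $d$ (so that second covariant derivatives of the potential $d$ commute freely); the technical challenge is matching the rational coefficients $-(n+1)/(n+2)$ for the $s$-term and $2n/((n+2)^2(n-1))$ for the $d$-term in the statement against the numerous rational factors coming from the normalisation of the trace parts in~\eqref{eq:D-decomp}.
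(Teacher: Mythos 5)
Your proposal follows essentially the same route as the paper's (very terse) proof: trace the integrability condition \eqref{eq:V-prolong} over $i,j$ (the curvature term $g^{ij}R\indices{_{ijk}^m}$ dropping out), solve for $\nabla s$, skew-symmetrise to kill the Ricci tensor, and substitute the decomposition \eqref{eq:D-decomp}; your accounting of the trace pieces, including the $\tfrac{1}{n-1}\,\young(k,l)\,s_{l,k}$ contribution that produces the prefactor $\tfrac{n-2}{n-1}$, is correct. The one cross-term you should add to your list of vanishing ones is the quadratic $N\cdot N$ piece, $\young(k,l)\,N\indices{_{ik}^a}N\indices{_{a}^i_{l}}$, which is not among the four $N$-linear combinations on the right of \eqref{eq:ds} but is in fact symmetric in $k,l$ by the cyclic identity $N_{ijk}+N_{jki}+N_{kij}=0$ together with a dummy-index relabelling, so it too drops out.
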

\begin{proof}
    In a $(n+1)$-parameter system, we find that \(D\) satisfies \eqref{eq:V-prolong}. Taking the trace in \(i,j\),
    \begin{align*}
        \left( s\indices{^m_{,k}} - D\indices{_{k}^{im}_{,i}}  + s^a D\indices{_{ak}^m} - D\indices{_{k}^{ia}} D\indices{_{ai}^m} + R\indices{_{k}^m} \right) = 0.
    \end{align*}
    Solving for \(s_{m,k},\) then skew symmetrising in \(m,k\), and finally using the decomposition \eqref{eq:D-decomp}, we obtain \eqref{eq:ds}.
\end{proof}

\section{Extendable \texorpdfstring{$(n+1)$}{(n+1)}-parameter systems}\label{sec:extendable}

\subsection{Necessity of the condition in the main theorem}\label{sec:necessity}

In this section, we prove the necessity of $N=0$ in Theorem~\ref{thm:main}, i.e.\ we prove that if a $(n+1)$-parameter system is extendable, then $N$ must vanish.
\begin{lemma}\label{lem:necessity}
    If $T_{ijk}$, defined as in Formula~\eqref{eq:T.from.D}, satisfies the conditions of a non-degenerate structure tensor, then $N=0$.
\end{lemma}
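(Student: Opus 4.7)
The proof should be essentially a direct calculation exploiting the fact that the difference between $T$ and $D$ lies entirely in the trace component of the decomposition of $\yng(2)\otimes\yng(1)$, and hence is annihilated by the hook projector $\Pi_{(2,1)\circ}$.

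The plan is as follows. By the defining formula~\eqref{eq:T.from.D}, we have
\[
    T_{ijk} = D_{ijk} - \frac{1}{n}\, g_{ij}\, s_k,
\]
where $s_k=D\indices{^a_{ak}}$. Applying the projector $\Pi_{(2,1)\circ}$ to both sides gives
\[
    \Pi_{(2,1)\circ} T_{ijk} = N_{ijk} - \frac{1}{n}\,\Pi_{(2,1)\circ}(g_{ij}\,s_k).
\]
Now recall from~\eqref{eq:decomposition.T.non-deg} that a non-degenerate structure tensor decomposes as a totally symmetric trace-free part $S_{ijk}$ plus pure trace terms $\bar t_i g_{jk} + \bar t_j g_{ik} - \tfrac{2}{n} g_{ij} \bar t_k$; none of these components lies in the hook-symmetric trace-free part, so $\Pi_{(2,1)\circ} T_{ijk}=0$, as already noted in~\eqref{eq:linear.condition.nondeg}. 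It therefore suffices to show that $\Pi_{(2,1)\circ}$ annihilates any tensor of the form $g_{ij}\, s_k$.

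The second step is a short direct computation. Setting $M_{ijk}=g_{ij}s_k$, the relevant traces are $M\indices{^i_{ji}} = s_j$ and $M\indices{^i_{ij}} = n s_j$, so $m_j = (1-n)s_j$. Substituting into the formula for $\Pi_{(2,1)\circ}$ from the definition of the obstruction tensor yields
\[
    \Pi_{(2,1)\circ}(g_{ij} s_k)
    = \tfrac{1}{3}\bigl(2 g_{ij} s_k - 2 g_{k(i} s_{j)}\bigr)
      - \tfrac{2}{3} g_{ij} s_k + \tfrac{2}{3} g_{k(i} s_{j)} = 0,
\]
so that $N_{ijk}=\Pi_{(2,1)\circ} T_{ijk}=0$, completing the proof.

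The computation is routine, so there is no real obstacle; the only minor subtlety is keeping the two different traces $M\indices{^i_{ji}}$ and $M\indices{^i_{ij}}$ of $g_{ij}s_k$ carefully apart when evaluating $m_j$. Conceptually, the content of the lemma is simply that the map $D\mapsto T$ of~\eqref{eq:T.from.D} is a projection along the trace component $g_{ij} s_k$, which is transverse to the hook summand $\yng(2,1)_\circ$ in the decomposition of $\yng(2)\otimes\yng(1)$.
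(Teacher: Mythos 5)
Your proposal is correct and takes essentially the same approach as the paper: the paper's (one-line) proof likewise deduces $\Pi_{(2,1)\circ}T=0$ from the decomposition~\eqref{eq:decomposition.T.non-deg} of a non-degenerate structure tensor via~\eqref{eq:linear.condition.nondeg}. The only difference is that you make explicit the remaining step, namely the verification that $\Pi_{(2,1)\circ}(g_{ij}s_k)=0$, which the paper leaves implicit and which your computation of $m_j=(1-n)s_j$ carries out correctly.
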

\begin{proof}
    We observe that~\eqref{eq:decomposition.T.non-deg} holds under the hypothesis of the lemma due to \eqref{eq:linear.condition.nondeg}, c.f.~\cite{KSV2023}.    
\end{proof}

\begin{remark}
    Note that we insist here that the structure tensor $T_{ijk}$ of the non-degenerate system is a projection of the structure tensor $D_{ijk}$ such that the potentials of the $(n+1)$-parameter systems are automatically potentials of the non-degenerate system as well.
    The condition $N=0$ is crucial in this regard. Indeed, for a $(n+1)$-parameter system, with structure tensor $D_{ijk}$, consider the tensor
    \[
        T_{ijk} = S_{ijk} + \bar t_ig_{jk} + \bar t_j g_{ik} - \frac2n\,g_{ij}\bar t_k
    \]
    where
    \[
        S_{ijk} := \frac16 \ {\young(ijk)}_\circ D_{ijk}\,,\qquad
        \bar t_i:=\frac{n}{(n-1)(n+2)}\,\left( D\indices{_{ia}^a}
        -\frac1n\,D\indices{^a_{ai}} \right)\,.
    \]
    The tensor field $T_{ijk}$, such defined, would be symmetric and trace-free in its first two indices and would satisfy the linear integrability condition~\eqref{eq:linear.condition.nondeg} of a non-degenerate structure tensor. However, its trace might still fail to be closed, violating the conditions of a non-degenerate structure tensor.
\end{remark}

\begin{example}
    Consider the non-degenerate system given by
    \(g = \mathrm d x^2 +\mathrm d y^2 + \mathrm d z^2, \)
    \begin{align}
        V_{nd} = c_1 (x^2 + y^2 + z^2) + \frac{c_2}{x^2}+\frac{c_3}{y^2}+\frac{c_4}{z^2}+c_5.
    \end{align}
    This example is referred to in the literature as the generic system in three dimensions, labeled [I]. It was first introduced by \cite{SW67} and further studied in \cite{KKMIV06}, for instance.
    It is known that \(V = V_{nd}\) satisfies \(V_{,ij} = T\indices{_{ij}^m} V_{,m} + \frac{1}{n} g_{ij} \Delta V,\) where
    \begin{multline*}
    T = -\frac{2}{x} (\mathrm{d} x)^3 - \frac{2}{y} (\mathrm{d} y)^3 - \frac{2}{z} (\mathrm{d} z)^3 + \frac{1}{x} (\mathrm{d} y \otimes \mathrm{d} y \otimes \mathrm{d} x ) + \frac{1}{x} (\mathrm{d} z \otimes \mathrm{d} z \otimes \mathrm{d} x )
    \\
    + \frac{1}{y} (\mathrm{d} x \otimes \mathrm{d} x \otimes \mathrm{d} y ) + \frac{1}{y} (\mathrm{d} z \otimes \mathrm{d} z \otimes \mathrm{d} y )+ \frac{1}{z} (\mathrm{d} x \otimes \mathrm{d} x \otimes \mathrm{d} z ) + \frac{1}{z} (\mathrm{d} y \otimes \mathrm{d} y \otimes \mathrm{d} z ).
    \end{multline*}
    Consider the restricted potential
    \begin{align}
        V_\text{ext} &= \frac{c_2}{x^2}+\frac{c_3}{y^2}+\frac{c_4}{z^2}+c_5
    \end{align}
    This is an example of an extendable $(n+1)$-parameter potential. Here \(V= V_{ext}\) satisfies \(V\indices{_{,ij}} = D\indices{_{ij}^m} V\indices{_{,m}}\) where
    \begin{align}
        D = -\frac{3}{x} (\mathrm{d}x)^3 -\frac{3}{y} (\mathrm{d}y)^3 -\frac{3}{z} (\mathrm{d}z)^3.
    \end{align}
     One can check that \(D_{ijk} = T_{ijk} + \frac{1}{n} g_{ij} \otimes s_k,\) where 
    \begin{align}
        s_k = -\frac{3}{x} \mathrm{d} x-\frac{3}{y} \mathrm{d} y-\frac{3}{z} \mathrm{d} z.
    \end{align}
    Clearly, this one form is closed, \(s_k = f_{,k}\), where \(f(x,y,z) = -3 \ln(xyz).\)
\end{example}

\subsection{The restricting condition}\label{sec:restricting.condition}

This section is a brief digression from the proof of Theorem~\ref{thm:main}. We have already seen that the vanishing of the obstruction tensor is necessary for the extendability of the $(n+1)$-parameter system.

We now investigate, given a non-degenerate system, how we obtain a $(n+1)$-parameter system from it. In this regard, note that fixing a $(n+1)$-dimensional subspace within $\mathcal V$ requires one linear algebraic condition on the $n+2$ constant coefficients in the basis representation of $V\in\mathcal V$. On the other hand, \eqref{eq:semi-degeneracy.condition} requires us to provide a $1$-form. Here, we show that any such $1$-form is obtained locally from initial data consisting of $n+1$ constants.

Thus, consider a $(n+1)$-parameter system that is extendable, i.e.\ that satisfies $N=0$. We have, c.f.\ Section~\ref{sec:semi-deg}, that
\[
    D_{ijk} = S_{ijk} + \bar t_ig_{jk}+\bar t_jg_{ik}-\frac2n g_{ij}\bar t_k
        +\frac1n s_kg_{ij}
        = T_{ijk}+\frac1n s_kg_{ij}\,.
\]
We are left with the equation, c.f.\ Remark~\ref{rmk:semi-degeneracy.cond},
\begin{equation}\label{eq:Wilczynski.semi.3}
    \Delta V=s^{,k}V_{,k}
\end{equation}
which holds for a $(n+1)$-parameter system.
This equation has to be consistent with~\eqref{eq:Wilczynski}, and in particular with~\eqref{eq:Wilczynski.2}.
Taking another derivative of~\eqref{eq:Wilczynski.semi.3} and equating with~\eqref{eq:Wilczynski.2}, then substituting~\eqref{eq:Wilczynski.semi.3} to eliminate $\Delta V$, we obtain
\begin{equation}\label{eq:dLaplace.V}
    \left( \frac{n}{n-1}q_{ka}+ \frac{1}{n-1}t_ks_{,a}\right)V^{,a}
    = \nabla_k\Delta V = \left( s_{,ak} + s^{,b}D_{bka}\right)V^{,a}\,.
\end{equation}
Note that $q$, $t$ and $T$ only depend on the non-degenerate system underpinning the $(n+1)$-parameter one, while $s$ is specific to the $(n+1)$-parameter system and additional to the non-degenerate system.
Because of condition \ref{item:semi-deg.S3} we conclude
\begin{equation}\label{eq:D²s}
    s_{,ak} = \frac{n}{n-1}\,q_{ka}
    +\left(
        \frac{1}{n-1}t_kg_{ab}
        -T_{bka}
        -\frac1n\,s_{,k} g_{ab}
    \right)s^{,b}\,.
\end{equation}
We verify that the integrability conditions for this equation hold for $(n+1)$-parameter systems with $N=0$.
\begin{lemma}
    The Ricci identities for~\eqref{eq:D²s} are satisfied, if $s_{,k}=D\indices{^a_{ak}}$ with $D$ being the structure tensor of an extendable $(n+1)$-parameter system. 
\end{lemma}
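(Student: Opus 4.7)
The plan is to reduce the Ricci integrability of~\eqref{eq:D²s} to a symmetry check on its right-hand side. Since the system is extendable, $N=0$, and Lemma~\ref{lem:sformula} then gives $\young(k,l)\,s_{l,k}=0$ (for $n\geq3$). On the simply connected manifold $M$, the $1$-form $s$ is therefore exact: $s_k=s_{,k}$ for a locally defined scalar~$s$. Consequently, the left-hand side of~\eqref{eq:D²s} is the Hessian of a scalar function and is automatically symmetric in $a,k$, so the Ricci integrability condition for~\eqref{eq:D²s} reduces to the requirement that the right-hand side be symmetric in $a$ and $k$.

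Next, I would antisymmetrize the right-hand side of~\eqref{eq:D²s} in $a$ and $k$ and examine the resulting pieces. The term $\tfrac{n}{n-1}q_{ka}$ drops out because $q$ is symmetric: by Lemma~\ref{lem:tsclosed} the $1$-form $t_k$ is closed (since $s_k$ just was), and combined with the non-degenerate integrability condition $(\#4)$ this forces $q_{kl}=q_{lk}$. The term $-\tfrac{1}{n}s_{,k}s_{,a}$ is manifestly symmetric. What then remains is the single identity
\[
    \tfrac{1}{n-1}\bigl(t_k\,s_{,a} - t_a\,s_{,k}\bigr) \;=\; (T_{bka}-T_{bak})\,s^{,b},
\]
whose verification completes the symmetry check.

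For this remaining step I would invoke that, since $N=0$, the tensor $T_{ijk}$ defined by~\eqref{eq:T.from.D} admits the non-degenerate decomposition~\eqref{eq:decomposition.T.non-deg} with a totally symmetric $S_{ijk}$ and a $1$-form $\bar t$. A direct substitution (observing that $S_{ijk}$ contributes symmetrically and drops out of the antisymmetrization in the last two indices of $T$) yields
\[
    T_{bka} - T_{bak} \;=\; \tfrac{n+2}{n}\bigl(\bar t_k\,g_{ab} - \bar t_a\,g_{bk}\bigr),
\]
so that contraction with $s^{,b}$ gives $\tfrac{n+2}{n}(\bar t_k\,s_{,a} - \bar t_a\,s_{,k})$. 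On the other side, the trace relation $t_k = \tfrac{(n+2)(n-1)}{n}\,\bar t_k$ gives $\tfrac{1}{n-1}t_k = \tfrac{n+2}{n}\bar t_k$, so the two expressions match and the identity holds.

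The computation itself is elementary tensor algebra, and no serious technical obstacle is expected. The one real conceptual point is to recognize that $N=0$ is precisely what supplies, through Lemma~\ref{lem:sformula}, the closedness of $s_k$; this is what permits us to treat the left-hand side of~\eqref{eq:D²s} as a (symmetric) Hessian and to frame the Ricci identity as the symmetry of the right-hand side in $a,k$. Once this is in place, the decomposition of $T$ and the trace relation between $t_k$ and $\bar t_k$ drive the remainder of the verification.
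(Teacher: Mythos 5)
Your computations are correct as far as they go, but they verify the wrong integrability condition, so there is a genuine gap. Equation~\eqref{eq:D²s} is a closed first-order system for the quantities $s_{,a}$, and the Ricci identities the lemma refers to (and which the subsequent remark needs in order to conclude that solutions exist and depend on $n+1$ constants) are the closure conditions obtained by differentiating~\eqref{eq:D²s} \emph{once more}: one must impose $s_{,akb}-s_{,abk}=R\indices{^{l}_{akb}}s_{,l}$, re-substitute~\eqref{eq:D²s} for every second derivative of $s$ that reappears, and check that the resulting algebraic identity in $q$, $t$, $T$ and $s$ holds. This is what the paper's proof does: the computation produces Equation~\eqref{eq:s.integrability}, whose three blocks vanish precisely because of the non-degenerate integrability conditions $(\#3)$, $(\#1)$ and $(\#2)$. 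Your argument never differentiates~\eqref{eq:D²s}, so it never touches this condition; checking that the right-hand side of~\eqref{eq:D²s} is symmetric in $a,k$ is only the lower-order Ricci identity $s_{,ak}=s_{,ka}$ for the scalar $s$. That check is a legitimate piece of the consistency analysis (it is what lets a solution $1$-form be integrated to a function), and your verification of it is correct --- $q_{ka}$ is symmetric by $(\#4)$ together with the closedness of $t$, and the antisymmetric parts of $\tfrac1{n-1}t_ks_{,a}$ and $T_{bka}s^{,b}$ cancel via the decomposition~\eqref{eq:decomposition.T.non-deg} and the relation $t_k=\tfrac{(n+2)(n-1)}{n}\bar t_k$ --- but by itself it does not establish that the prolongation system~\eqref{eq:D²s} admits local solutions at all, which is the actual content of the lemma.

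There is also a mild circularity in your framing: you justify the symmetry of the left-hand side by first deducing from $N=0$ (via Lemma~\ref{lem:sformula}) that $s$ is exact, and then declaring the left-hand side to be a Hessian. When~\eqref{eq:D²s} is read as a PDE system to be solved for an unknown $s$ --- which is how the following remark uses it --- the exactness of a solution is a consequence of the symmetry of the right-hand side, not a property one may impose on the left-hand side in advance. The repair is to present your symmetry check as one of the two closure conditions of the system and to add the missing second-order computation described above.
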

\begin{proof}
    We take one further derivative,
    \begin{multline*}
    s_{ak,b} = \frac{n}{n-1} q_{ka,b} + s\indices{^m_{,b}} \left( \frac{1}{n-1} t_k g_{am} - T_{mka} - \frac{1}{n} s_k g_{am} \right)
    \\
    + s^m \left( \frac{1}{n-1} t_{k,b} g_{am} - T_{mka,b} - \frac{1}{n} s_{k,b} g_{am} \right)
    \end{multline*}
    and substitute this into the Ricci identity,
    \begin{multline*}
        R\indices{^{l}_{akb}}s_{,l}
        =  (s_{,akb}-s_{,abk} )
        \\
        = \young(b,k) \ \left[ \frac{n}{n-1} q_{ka,b} + s\indices{^m_{,b}} \left( \frac{1}{n-1} t_k g_{am} - T_{mka} - \frac{1}{n} s_k g_{am} \right) \right.
        \\
        + \left. s^m \left( \frac{1}{n-1} t_{k,b} g_{am} - T_{mka,b} - \frac{1}{n} s_{k,b} g_{am} \right) \right]\,.
    \end{multline*}
    Second derivatives $s_{,ij}$ can be replaced using~\eqref{eq:D²s}, yielding
    \begin{multline}\label{eq:s.integrability}
        0 = \young(k,b) \ \left(\frac{n}{n-1}  \right) \left( q_{ka,b} + q_{km} T\indices{^m_{ba}} + \frac{1}{n-1} q_{ba} t_k \right)
        \\
        +s^m\  
        \young(k,b)\ \left( T_{mba,k} + T_{mbl} T\indices{^l_{ka}} -R\indices{_{mbka}} + \frac{1}{n}  g_{mb} q_{ka} \right)
        \\
        +\frac{1}{n}\ s^l s_a\ 
        \young(k,b)\ \left( T_{lbk} + \frac{1}{n-1} g_{lb} t_k \right)\,.
    \end{multline}
    We compare this with the conditions of a non-degenerate system, discussed in Section~\ref{sec:non-deg}.
    We observe that the coefficients in~\eqref{eq:s.integrability} indeed vanish independently, due to the conditions $(\#3)$, $(\#1)$ and $(\#2)$, respectively.  
\end{proof}

\begin{remark}
    Given $n+1$ constants and a suitable point $x_0\in M$, we may therefore compute a unique smooth solution $s$ of \eqref{eq:D²s} in a neighborhood around $x_0$. Being provided with a non-degenerate system, we restrict to potentials satisfying \eqref{eq:Wilczynski.semi.3}. It then also satisfies~\eqref{eq:dLaplace.V} by construction. It follows that the space of such solutions to~\eqref{eq:Wilczynski.semi.3} has dimension $n+1$ and we hence obtain a $(n+1)$-parameter system.

    On the other hand, let us specify a (suitable) subspace of dimension $n+1$ within the space of potentials $\mathcal V$ of the non-degenerate system. `Suitable' here means that the constant potentials have to lie within the subspace.
    We may then solve the $n$ independent equations \eqref{eq:Wilczynski.semi.3} for $s_{,k}$ and such $s$ has to satisfy~\eqref{eq:dLaplace.V}.

    We conclude that \eqref{eq:D²s} (together with its Ricci identity) is an adequate equation describing the $1$-forms $s$ that encode the extendable $(n+1)$-parameter system as a subsystem of the underlying non-degenerate one. 
\end{remark}

\subsection{Sufficiency: Extendability of a \texorpdfstring{$(n+1)$}{(n+1)}-parameter system}\label{sec:sufficiency}
It remains to prove that the vanishing of the tensor field $N$ associated to the $(n+1)$-parameter system, i.e.~$N=0$, implies the existence of an additional potential, compatible with any Killing tensor field associated to the $(n+1)$-parameter system.

We will complete this proof in two steps. First, we are going to define a (tentative) non-degenerate structure tensor $T$ from the structure tensor $D$ of the $(n+1)$-parameter system. We then show that this tensor field $T$ satisfies the integrability conditions of a non-degenerate superintegrable potential. Moreover, we verify that the potentials associated to the $(n+1)$-parameter system are solutions of this  system of partial differential equations (PDEs), and that it admits $n+2$ independent smooth solutions. This proves the existence of the extra potential, yielding a non-degenerate family of superintegrable potentials.
In a second step, we verify that this non-degenerate family of potentials is compatible with any of the Killing tensors associated to the $(n+1)$-parameter system. This suffices to prove the claim.

\subsubsection{Extendability of the superintegrable potential}\label{sec:sufficiency.potential}

We begin with the first step in the outlined procedure.
Let, for $n\geq3$,
$$ T\indices{_{ij}^k} = D\indices{_{ij}^{k}} -\frac1n g_{ij} s^{k}
$$
This is a candidate for a non-degenerate structure tensor as it is symmetric in $i$ and $j$ and also trace-free in these indices. However, we need to verify that all integrability conditions of a non-degenerate structure tensor are verified.

Consider the Wilczynski equation~\eqref{eq:Wilczynski.1} of a $(n+1)$-parameter system \eqref{eq:SWEQ}. Computing the trace-free part of it, we find
\begin{equation}\label{eq:Wilczynski.nondeg}
 \nabla^2_{ij}V-\frac1ng_{ij}\Delta V
 = D\indices{_{ij}^k}V_{,k}-\frac1n\,g_{ij}\,s^kV_{,k}
 = T\indices{_{ij}^k}V_{,k} .
\end{equation}
This equation is, by the hypothesis, satisfied for any potential $V$
within the $(n+1)$-dimensional space of compatible potentials.

We take one further differentiation and solve for the derivative of the Laplacian as in Section 4.1 of~\cite{KSV2023}:
\begin{subequations}\label{eq:Wilczynski.semi}
\begin{align}
    \nabla^2_{ij}V &= T\indices{_{ij}^a} V_{,a}
    + \frac{1}{n} g_{ij}\Delta V
    \label{eq:Wilczynski.semi.1}
    \\
    \tfrac{n-1}{n} \nabla_k (\Delta V) &= q\indices{_k^a} V_{,a}
    +\frac{1}{n} t_k \Delta V,
    \label{eq:Wilczynski.semi.2}
\end{align}
\end{subequations}
where 
\begin{align*}
    Q\indices{_{ijk}^m} &:= T\indices{_{ij}^m_{,k}} + T\indices{_{ij}^l} T\indices{_{lk}^m} - R\indices{_{ijk}^m},
    \qquad\text{and}
    &q\indices{_j^m} &:= Q\indices{_{ij}^{im}}.
\end{align*}
The integrability conditions for the above equations are identical to those in \cite{KSV2023}, namely
\begin{align*}
    \young(j,k) \ V_{,ijk} &= R\indices{^m_{ijk}} V_{,m}\,,
    &\young(k,l)  \left( \Delta V \right)_{,kl} &= 0. 
\end{align*}
These equations are of the form:
\begin{align}
		\underbrace{ \young(j,k) \left( Q\indices{_{ijk}^m}+\frac1{n-1}g_{ij}q\indices{_k^m}\right) }_{(*1)} V_{,m}
		+\frac1n \ 
        \underbrace{ \young(j,k)\left(T_{ijk}+\frac1{n-1}g_{ij}t_k\right) }_{(*2)}\Delta V&=0 \label{eq:Qeqn1}\\
	\intertext{and, respectively,}
		\underbrace{ \young(k,l) \left(q\indices{_k^n_{,l}}+T\indices{_{ml}^n}q\indices{_k^m}+\frac1{n-1}t_kq\indices{_l^n}\right) }_{(*3)} V_{,n}
        +\frac1n \ \underbrace{ \young(k,l) \left(t_{k,l}+q_{kl}\right) }_{(*4)} \Delta V
		&=0. \label{eq:Qeqn2}
\end{align}
\textit{Note:} These equations are formally identical to the previously considered Equations \eqref{eq:original_integ_conditions.1} and \eqref{eq:original_integ_conditions.2}. However, we stress that our hypotheses are now different, as our point of departure is a non-degenerate system, i.e.\ these equations already hold by hypothesis, and our objective here is to find the possible choices for~$s$ instead.\\
Since \(N\) is assumed to be zero, $(*2)$ is identically zero, and this requirement is equivalent to~\eqref{eq:linear.condition.nondeg}. Thus \eqref{eq:Qeqn1} reduces to the first summand being zero, and by the assumption~\ref{item:semi-deg.S3} this implies that $(*1)$ vanishes identically.\\
Before approaching \eqref{eq:Qeqn2}, we recall the following lemma, which has been proven in \cite{KSV2023}.
\begin{lemma}\label{lem:TD}
    If \eqref{eq:linear.condition.nondeg} holds, the structure tensor \(T\) can be decomposed into
    \begin{align}
        T_{ijk} = \mathring T_{ijk} + \young(ij) \ \left( \bar{t}_i g_{jk} - \frac{1}{n} g_{ij} \bar{t}_k \right),
    \end{align}
    where \(\mathring T_{ijk}\) is a totally symmetric, trace-free tensor, and \(\bar{t}\) is the rescaled trace as defined earlier.
\end{lemma}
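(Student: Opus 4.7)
My plan is to reduce the claim to a bookkeeping exercise in the $O(n)$-irreducible decomposition of three-tensors that are symmetric in their first two indices. As recalled in Section~\ref{sec:semi-deg}, this representation splits as
\[
    \yng(2)\otimes\yng(1) \cong {\yng(3)}_{\ \circ}\oplus\yng(1)\oplus{\yng(2,1)}_\circ\oplus\yng(1),
\]
with the two $\yng(1)$ copies standing for two independent trace components. By construction the tensor in the lemma, $T\indices{_{ij}^k}=D\indices{_{ij}^k}-\frac1n g_{ij}s^k$, is symmetric \emph{and trace-free} in $(i,j)$, which immediately kills one of the two trace copies. Thus $T$ is a sum of a totally symmetric trace-free piece, a hook-type piece, and a single remaining trace part parameterised by a one-form.

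With the hypothesis $N=0$ the hook piece drops out, and my next step would be to write $T_{ijk}=\mathring T_{ijk}+U_{ijk}$, where $\mathring T$ is totally symmetric and trace-free and $U$ is the unique remaining trace-type tensor. Any such $U$ must be a linear combination of $\bar t_i g_{jk}$, $\bar t_j g_{ik}$ and $g_{ij}\bar t_k$; the symmetry in $(i,j)$ forces equal coefficients on the first two, and imposing $g^{ij}U_{ijk}=0$ fixes the third coefficient up to overall scale, forcing $U_{ijk}=\bar t_i g_{jk}+\bar t_j g_{ik}-\tfrac{2}{n}g_{ij}\bar t_k$. One recognises this at once as the image of $\young(ij)$ applied to $\bar t_i g_{jk}-\tfrac{1}{n}g_{ij}\bar t_k$, which is exactly the form stated in the lemma.

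To pin down $\bar t$ I would then contract with $g^{jk}$: the totally trace-free $\mathring T$ contributes nothing, and a short calculation with the ansatz for $U$ gives $T\indices{_{ia}^a}=\tfrac{(n-1)(n+2)}{n}\bar t_i$, consistent with the rescaled trace $\bar t_i=\tfrac{n}{(n-1)(n+2)}T\indices{_{ia}^a}$ already adopted in Section~\ref{sec:non-deg}. The entire argument is representation-theoretic bookkeeping, so I do not expect a genuine obstacle; the only point warranting care is verifying that, once the hook component has been removed, the trace-type ansatz really does exhaust the remaining $(i,j)$-symmetric, $(i,j)$-trace-free piece, and that the Young normalisations line up with the factor $\tfrac{2}{n}$ appearing in the trace tensor.
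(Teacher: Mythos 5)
Your argument is correct and is essentially the approach the paper intends: the paper itself defers the proof of this lemma to \cite{KSV2023}, but the $\mathrm{GL}_n$/$O(n)$-irreducible decomposition it records in \eqref{eq:D-decomp} and around \eqref{eq:decomposition.T.non-deg} is exactly your bookkeeping --- one trace copy is killed because $T$ is trace-free in $(i,j)$, the hook copy is killed by the hypothesis \eqref{eq:linear.condition.nondeg} (which equals $N=0$ since $\Pi_{(2,1)\circ}$ annihilates the pure-trace difference between $T$ and $D$), and the remaining trace component together with the normalisation $\bar t_i=\tfrac{n}{(n-1)(n+2)}T\indices{_{ia}^a}$ is fixed by contraction, as you do. No gaps.
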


Since \(N\) is identically zero, Lemma \ref{lem:sformula} implies that \(s\) is closed and hence Lemma \ref{lem:tsclosed} implies that \(t\) is closed. If Lemma \ref{lem:TD} holds, it is shown in Corollary 4.6 in \cite{KSV2023} that \(q_{kl}\) is symmetric, hence 
\begin{align*}
    \young(k,l) \ t_{k,l} &= 0,&
    \young(k,l) \ q_{kl} &= 0.
\end{align*}
Thus \eqref{eq:Qeqn2} reduces to its first summand being zero, and by the assumption~\ref{item:semi-deg.S3} this implies that $(*3)$ vanishes identically.
Therefore the integrability conditions \eqref{eq:Qeqn1}, \eqref{eq:Qeqn2} are satisfied for $T$.
We conclude that the system of PDEs \eqref{eq:Wilczynski.semi} can be solved locally for given initial values for $\nabla V$ and $V$ at a point. The solution must therefore depend linearly on $n+2$ parameters, confirming the existence of a non-degenerate family of potentials.
We also see by comparison of the $(n+1)$-parameter Wilczynski equation \eqref{eq:SWEQ} and non-degenerate Wilczynski equation \eqref{eq:Wilczynski.semi.1} that any solution of \eqref{eq:SWEQ} is also a solution of~\eqref{eq:Wilczynski.semi.1}, i.e.~the space of potentials of the initial $(n+1)$-parameter system must lie inside the space of solutions of the PDE system~\eqref{eq:Wilczynski.semi.1}.

\subsubsection{The associated Killing tensors}\label{sec:sufficiency.killing}
In the previous paragraph we have confirmed that a $(n+1)$-parameter family of potentials, for which $N=0$, satisfies the conditions of a non-degenerate potential.
In order to conclude the proof, and show that $T_{ijk}$ satisfies all conditions of a non-degenerate structure tensor, we now also need to ensure that the Killing tensors of the $(n+1)$-parameter system are compatible with the non-degenerate family of potentials.
Compatibility here means that the Bertrand-Darboux condition~\eqref{eq:Bertrand-Darboux} is satisfied for all Killing tensors of the $(n+1)$-parameter system and all solutions $V$ of the prolongation system~\eqref{eq:Wilczynski.semi} obtained in the previous section.
To verify this, recall~\eqref{eq:Wilczynski.nondeg} and substitute it into~\eqref{eq:Bertrand-Darboux}. We obtain
\begin{align}
    0 &= \left[ \nabla_jK_{ia}-\nabla_iK_{ja}
    + K_{ib}D\indices{^{b}_{ja}} - K_{jb}D\indices{^{b}_{ia}} \right]V^{,a}
    \nonumber
    \\
    &= \left[ \nabla_jK_{ia}-\nabla_iK_{ja}
    + K_{ib}T\indices{^b_{ja}} - K_{jb}T\indices{^b_{ia}}
    +\frac1n K_{ib}g\indices{^b_{j}}s_a
    -\frac1n K_{jb}g\indices{^b_{i}}s_a \right]\,V^{,a}
    \nonumber
    \\
    &= \left[ \nabla_jK_{ia}-\nabla_iK_{ja}
    + K_{ib}T\indices{^{b}_{ja}} - K_{jb}T\indices{^{b}_{ia}} \right]V^{,a}
    \label{eq:pre-shortcut}
\end{align}
where $T$ is defined from $D$ via~\eqref{eq:T.from.D}.

\begin{lemma}
    Under the hypotheses of Theorem~\ref{thm:main}, a Killing tensor $K_{ij}$ of the $(n+1)$-parameter system satisfies the prolongation
    \begin{equation}\label{eq:shortcut.N=0}
        \nabla_kK_{ij} = \frac13\,\young(ji,k)\ T\indices{^b_{ji}}K_{bk}\,.
    \end{equation}
\end{lemma}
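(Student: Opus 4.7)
The plan is to exploit equation~\eqref{eq:pre-shortcut}, which asserts that a certain tensor expression built from $\nabla K$, $K$ and $T$ annihilates every gradient $V^{,a}$ of a potential $V$ compatible with the $(n+1)$-parameter system. The first step is to remove the $V^{,a}$: within the $(n+1)$-parameter family, the initial data for a potential at a point consist of the value $V(p)$ together with the $n$ components $V_{,a}(p)$ (freely prescribable by the prolongation~\eqref{eq:SWEQ}), so at a generic point the gradients of the compatible potentials span $T_p^{*}M$. Hence the bracket in~\eqref{eq:pre-shortcut} must vanish identically, yielding
\begin{equation*}
    \nabla_j K_{ia} - \nabla_i K_{ja} = K_{jb}\,T\indices{^b_{ia}} - K_{ib}\,T\indices{^b_{ja}}.
\end{equation*}

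Next, I would combine this identity with the Killing equation $\nabla_{(k}K_{ij)}=0$. Setting $A_{ijk} := \nabla_k K_{ij}$, which is symmetric in $(i,j)$, the tensor $A$ lives in the $\mathrm{GL}_n$-representation $\yng(2)\otimes\yng(1) \cong \yng(3)\oplus\yng(2,1)$. The Killing equation eliminates the totally symmetric $\yng(3)$-component, confining $A$ to the hook $\yng(2,1)$, while the identity above supplies the skew data $A_{ijk}-A_{kji}$. A short algebraic manipulation using $A_{ijk}=A_{jik}$ and the cyclic Killing identity $A_{ijk}+A_{jki}+A_{kij}=0$ yields the reconstruction formula
\begin{equation*}
    A_{ijk} = \tfrac13\bigl[(A_{ijk}-A_{kji}) + (A_{jik}-A_{kij})\bigr].
\end{equation*}
Substituting the skew identity on the right-hand side and invoking $T\indices{^b_{ij}}=T\indices{^b_{ji}}$ produces precisely the hook-symmetrised expression $\tfrac13\,\young(ji,k)\,T\indices{^b_{ji}}K_{bk}$ asserted in~\eqref{eq:shortcut.N=0}.

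The step I expect to require most care is the spanning argument in the first paragraph: one has to be sure that the $n$ independent gradients $V^{,a}$ are genuinely available within the $(n+1)$-parameter family alone, because the extension to a non-degenerate family is exactly what the present lemma is meant to help establish downstream, so we cannot invoke the $(n+2)$-parameter freedom here. The subsequent Young-tableau bookkeeping in the second paragraph is entirely routine once this spanning has been secured; as quick sanity checks, one verifies that the resulting expression is manifestly symmetric in $(i,j)$ and that its totally symmetric part in $(i,j,k)$ vanishes, consistent with $A_{ijk}$ being confined to the hook component.
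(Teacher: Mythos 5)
Your proposal is correct and follows essentially the same route as the paper: both extract the skew identity $\nabla_jK_{ia}-\nabla_iK_{ja}=K_{jb}T\indices{^b_{ia}}-K_{ib}T\indices{^b_{ja}}$ from the pointwise independence of the gradients $V_{,a}$ within the $(n+1)$-parameter family (which is exactly the spanning fact the paper already invokes in Section~\ref{sec:semi-deg} to define $D$), and then recombine it with the cyclic Killing identity to reconstruct $\nabla_kK_{ij}$; your identity $A_{ijk}=\tfrac13[(A_{ijk}-A_{kji})+(A_{jik}-A_{kij})]$ is just a repackaging of the paper's linear combination $3\nabla_kK_{ij}=(\text{cyclic sum})+(\nabla_jK_{ik}-\nabla_iK_{jk})+2(\nabla_kK_{ji}-\nabla_jK_{ki})$.
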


\begin{proof}
    The proof is analogous to the corresponding proof in~\cite{KSV2024}.
    Since $K_{ij}$ are components of a Killing tensor,
    \[
        \nabla_kK_{ij}+\nabla_jK_{ik}+\nabla_iK_{jk} = 0.
    \]
    Moreover, as due to the hypothesis we are provided with a $(n+1)$-parameter family of potentials, the coefficients of $V^{,a}$ in~\eqref{eq:pre-shortcut} have to vanish independently, i.e.
    \[
        \nabla_jK_{ia}-\nabla_iK_{ja}
        = K_{jb}T\indices{^b_{ia}}
          - K_{ib}T\indices{^b_{ja}}.
    \]
    We hence obtain
    \begin{align*}
        3\nabla_kK_{ij}
        &= \left(
            \nabla_kK_{ij}+\nabla_jK_{ik}+\nabla_iK_{jk}
        \right)
        \\
        &\quad
        +\left( \nabla_jK_{ik}-\nabla_iK_{jk} \right)
        +2\left( \nabla_kK_{ji}-\nabla_jK_{ki} \right)
        \\
        &= \left( K_{jb}T\indices{^b_{ik}}
          - K_{ib}T\indices{^b_{jk}} \right)
        +2\left(
            K_{kb}T\indices{^b_{ji}}
          - K_{jb}T\indices{^b_{ki}}
        \right)
        = \young(ji,k)\ T\indices{^a_{ij}}K_{ak}.
    \end{align*}
\end{proof}
\noindent Note that~\eqref{eq:shortcut.N=0} coincides precisely with the prolongation for the Killing tensors in a non-degenerate system with structure tensor $T_{ijk}$ \cite{KSV2024}.
We have therefore shown, under the hypotheses of Theorem~\ref{thm:main}, that if $K_{ij}$ is a Killing tensor compatible with the $(n+1)$-parameter system with structure tensor $D_{ijk}$, then it is also a Killing tensor compatible with the non-degenerate potential for the non-degenerate structure tensor $T_{ijk}$.
Hence, we have confirmed that there are $2n-1$ functionally independent integrals of the motion (the same as those of the initial $(n+1)$-parameter system) satisfying~\eqref{eq:Bertrand-Darboux} for any solution $V$ of the non-degenerate Wilczynski equation~\eqref{eq:Wilczynski.nondeg}.
Hence Theorem~\ref{thm:main} is proven.

\section{Conformal superintegrability}\label{sec:conformal}

This section is dedicated to the proof of Theorem~\ref{thm:main.conformal}. We will begin with a brief review of conformal superintegrability as needed for our purposes. We note that conformally superintegrable systems generalize superintegrable systems. However, all second-order maximally conformally superintegrable systems are Stäckel equivalent to second-order maximally properly superintegrable systems, cf.\ \cite{Capel_thesis}.

\subsection{Preliminaries}\label{sec:conformal.preliminaries}

Conformally superintegrable systems are classical. We follow the introduction of (second order maximally) conformally superintegrable systems in \cite{KSV2024}.

\begin{definition}
    A \textit{second order conformal superintegrable system} is a Hamiltonian \(H =: F^{(0)}\) together with \(2n-2\) functions \(F^{(\alpha)}: T^*M \rightarrow \mathbb{R}, 1 \leq \alpha \leq 2n-2\) such that
    \begin{align*}
        \{ H, F^{(\alpha)}\} = 2\varrho^{(\alpha)} H
    \end{align*}
    for all \( 1 \leq \alpha \leq 2n-2 \) where \(\varrho^{(\alpha)}\) is a 1-form. Furthermore, \(F^{(\alpha)} = C\indices{^{(\alpha)}_{ij}}(\textbf{q}) p^i p^j + W^{(\alpha)}(\textbf{q})\)
\end{definition}
Similar to the regular superintegrable case, the above condition can be split into two conditions, first cubic in the momenta and the second linear in the momenta:
\begin{align}
    \{C^{(\alpha)},g\} &= 2 \varrho^{(\alpha)} g  \label{CKTE}\\ 
    \{C^{(\alpha)},V\} + \{W^{(\alpha)},g\} &= 2 \varrho^{(\alpha)} V. \label{CBDE}
\end{align}
The first condition being that \(C\) is a conformal Killing tensor, or that there exists a one-form \(\varrho_i\) such that:
\begin{align}
    \young(ijk)\ C_{ij,k} = \young(ijk)\ \varrho_{i}g_{jk}.
\end{align}
The second condition can be written as
\begin{align}\label{condition_conformal:dV}
    d V^{(\alpha)} = C^{(\alpha)}dV + \varrho^{(\alpha)} V.
\end{align}
The integrability condition for the equations above are similar to \eqref{eq:Bertrand-Darboux}, and can be considered the conformal equivalent, in components:
\begin{align}
    \young(i,j)\ \left( C\indices{^m_i}V_{,jm} + C\indices{^m_{i,j}}V_{,m} + \varrho_i V_{,j} + \varrho_{i,j} V \right) = 0
\end{align}
or by taking the derivative of \eqref{condition_conformal:dV}:
\begin{align}\label{eq:conformal_KdV}
    d (C^{(\alpha)}dV) + V d \varrho + dV \wedge \varrho = 0
\end{align}
Following the approach in the regular superintegrable case, we solve this equation for \(V_{,ij}\) and take the prolongation to find the two equations
\begin{align}
    V_{,ij} &= T\indices{_{ij}^a}V_{,a} + \frac1n g_{ij} \Delta V + \tau_{ij} V\\\label{eq:conformal_nd_WEQ}
    \frac{n-1}{n} \left( \Delta V\right)_{,k} &= q\indices{_k^a}V_{,a}+\frac1n\,t_k\,\Delta V + \gamma_{k}V.
\end{align}

\begin{definition}
    Let \((M,g)\) be a Riemannanian manifold. Let \(\mathcal{K}\) be a linear subspace in the space of rank-2 trace free Killing tensors of \(g\) and let \( V \subset \mathcal{C}^{\infty}(M)\) be a linear subspace in the space of functions on \(M\) with a functionally linearly independent basis. We assume that \(\mathcal{K}\) forms an irreducible set (of endomorphisms).
    \begin{enumerate}[label=(\roman*)]
        \item  The tuple \((M,g,\mathcal{K},\mathcal{V})\) is called a \textit{non-degenerate conformal system}, if for any \(V \in \mathcal{V}\) and \( K \in \mathcal{K},\) and with the Hamiltonian \(F^{(0)} := H = g^{ij}p_ip_j + V,\)
        \begin{enumerate}[label=(CN\arabic*)]
            \item \(K\) and \(V\) satisfy \eqref{eq:conformal_KdV}
            \item there are \(2n-2\) trace-free elements \(K^{(k)} \in \mathcal{V}\) such that \((F_{K^{(k)}})_{1 \leq k \leq 2n-2}\) is a functionally independent set (\(F_K\) is as defined in \eqref{definition:Kspace})
            \item dim \(\mathcal{V}=n+2\)
        \end{enumerate}
        \item  The tuple \((M,g,\mathcal{K},\mathcal{V})\) is called a \textit{semi-degenerate conformal system}, if for any \(V \in \mathcal{V}\) and \( K \in \mathcal{K},\) and with the Hamiltonian \(F^{(0)} := H = g^{ij}p_ip_j + V,\)
        \begin{enumerate}[label=(CS\arabic*)]
            \item \(K\) and \(V\) satisfy \eqref{eq:conformal_KdV}
            \item there are \(2n-2\) trace-free elements \(K^{(k)} \in \mathcal{V}\) such that \((F_{K^{(k)}})_{1 \leq k \leq 2n-2}\) is a functionally independent set (\(F_K\) is as defined in \eqref{definition:Kspace})
            \item dim \(\mathcal{V}=n+1\)
            \item \(\mathcal{V}\) cannot be extended such that dim \(\mathcal{V}=n+2\)
        \end{enumerate}
    \end{enumerate}
\end{definition}
Similar to the proper superintegrable case, we find that in the semi-degenerate case there exists an equation that is equivalent to
\begin{equation}\label{eq:conformal.semidegeneracy.condition}
    \Delta V = s^kV_{,k}+\chi\,V\,,
\end{equation}
where $\chi$ is a function and $s^k=g^{ka}s_{a}$ where $s_{a}$ are the components of a $1$-form.
Combining~\eqref{eq:conformal.semidegeneracy.condition} with~\eqref{eq:conformal_nd_WEQ}, we therefore obtain
\begin{align}
    V_{,ij} = D\indices{_{ij}^a} V_{,a} + \eta_{ij}V\,,
\end{align}
where we define
\begin{equation*}
    D\indices{_{ij}^a}:=T\indices{_{ij}^a}+\frac1n\,g_{ij}\,s^{a}\,,\qquad
    \eta_{ij}:=\tau_{ij}+\frac1n\,\chi\,g_{ij}\,.
\end{equation*}

\subsection{Conformal transformations of (n+1)-parameter systems}
\label{sec:conformal.transformations}

We begin by analyzing the transformation behavior of the structure tensor $D$ under conformal rescalings of the Hamiltonian, i.e.\ under the replacement
\[
    H\to\Omega^{-2}H=:\tilde H\,,
\]
which implies $g\to\Omega^2g=:\tilde g$ and $V\to\Omega^{-2}V=:\tilde V$.

A straightforward computation shows
\begin{align*}
    \tilde\nabla^2_{ij}\tilde V
    &= \Omega^{-2}\nabla^2_{ij}V
    +\bigg(
        -3\Upsilon_ig_j^a-3\Upsilon_jg_i^a+g_{ij}\Upsilon^a
    \bigg)\tilde\nabla_a\tilde V
    -2\bigg(
        \nabla_j\Upsilon_i+2\,\Upsilon_i\Upsilon_j
    \bigg)\,\tilde V
    \\
    &= \bigg(
        \underbrace{
        D\indices{_{ij}^a}-3\Upsilon_ig_j^a-3\Upsilon_jg_i^a+g_{ij}\Upsilon^a
        }_{=\tilde D\indices{_{ij}^a}}
    \bigg)\tilde\nabla_a\tilde V
    +\bigg(
        \underbrace{
        \eta_{ij}+2\,D\indices{_{ij}^k}\Upsilon_k
        -2\nabla_j\Upsilon_i-4\,\Upsilon_i\Upsilon_j
        }_{=\tilde \eta_{ij}}
    \bigg)\,\tilde V
\end{align*}
where $\tilde\nabla$ is the Levi-Civita connection of $\tilde g$ and where $\Upsilon_i=\Omega^{-1}\nabla_i\Omega$ as well as $\Upsilon^j=g^{ja}\Upsilon_a$.

We conclude that, in particular, the components of $D$ transform according to
\begin{align*}
    \bar t_i&\to\bar t_i-3\Upsilon_i
    \\
    s_i&\to s_i+(n-6)\Upsilon_i
    \\
    d_i&\to d_i-\frac{3n+2}{n}\Upsilon_i
\end{align*}
We observe that $s$ is conformally invariant if $n=6$, but not otherwise. We therefore define
\[
    b_i:=\frac13(n-6)\bar t_i+s_i\,,
\]
which is conformally invariant. We furthermore define
\[
    \sigma_i:=\frac13\bar t_i\,,
\]
which transforms like $\sigma_i\to\sigma_i-\Upsilon$.

\begin{lemma}\label{lem:conformally.invariant.on.Weyl}
    The data $(S\indices{_{ij}^k},N\indices{_{ij}^k},b_i)$ defines a conformally invariant structure on the Weyl manifold $(M,[(g,\sigma)])$.
\end{lemma}

\subsection{Proof of Theorem~\ref{thm:main.conformal}}
\label{sec:conformal.proof}

We observe that if a semi-degenerate conformal system is extendable, this has to be true for any semi-degenerate conformal system that is conformally equivalent to it, since the size of the space of compatible potentials is not changed under conformal rescalings.

\begin{lemma}\label{lem:conformal.main.1}
    If a semi-degenerate conformally superintegrable system is extendable, then its obstruction tensor vanishes, i.e.\ $N=0$. 
\end{lemma}
\begin{proof}
Due to Theorem~\ref{thm:main}, the condition for the extendability of a proper semi-degenerate system is $N=0$. Now, every conformally superintegrable system can be conformally rescaled (using one of its compatible potentials as Stäckel rescaling factor) into a properly superintegrable system. As discussed in the previous paragraph, the component $N\indices{_{ij}^k}$ in \eqref{eq:decompose} is conformally invariant, after raising one index. Therefore, an extendable semi-degenerate conformal system must have $N=0$.
\end{proof}

\begin{lemma}\label{lem:conformal.main.2}
    If a semi-degenerate conformally superintegrable system has obstruction tensor $N=0$, then it is extendable. 
\end{lemma}
\begin{proof}
    By a suitable conformal rescaling (we use a compatible potential as Stäckel rescaling factor), we obtain a conformally equivalent \textsl{proper} semi-degenerate superintegrable system. By the invariance of $N$, its structure tensor $\tilde D$ has the component $\tilde N$ with respect to the decomposition \eqref{eq:decompose}, where the tilde $\tilde{\ }$ has the obvious meaning. We conclude $\tilde N=0$, and therefore the proper semi-degenerate system is extendable. It follows that also the initial, conformally semi-degenerate system is extendable.
\end{proof}

Lemmas~\ref{lem:conformal.main.1} and~\ref{lem:conformal.main.2} together prove Theorem~\ref{thm:main.conformal}.

\section{Interpretation in terms of torsion}\label{sec:torsion.interpretation}

In this section, we prove Corollary~\ref{cor:torsion}.

\subsection{First claim of the corollary}

We observe that $\mathcal D=\nabla+D$ is torsion-free, since $D$ is symmetric in its lower indices.
Its $g$-dual connection $\mathcal D^*$, in contrast, has torsion $\mathcal T^*$, with
\begin{align*}
    g(\mathcal T^*(X,Y),Z)
    &=g(\mathcal D^*_XY-\mathcal D^*_YX-[X,Y],Z)
    \\
    &= g(D(X,Z),Y)-g(D(Y,Z),X)
    \\
    &=g(N(X,Z),Y)-g(N(Y,Z),X)+g(X,Z)u^\sharp(Y)-g(Y,Z)u^\sharp(X)
\end{align*}
where we let $u_i=\frac1n(s_i-(n+2)t_i)$.
We thus find that the torsion of $\mathcal D^*$ is vectorial if and only if $N=0$.
The first claim of Corollary~\ref{cor:torsion} then follows from Theorem~\ref{thm:main.conformal}.



\subsection{Second claim of the corollary}
Consider the Levi-Civita connection $\nabla$ of $g$, and the connection~\eqref{eq:good.connection}, i.e.\ $\mathcal D=\nabla+\Xi$ where we define
\[
    \Xi\indices{_{ij}^k}
    = D\indices{_{ij}^k}-\frac1ng_{ij}s^k+\frac{n+2}{n}\,g_{ij}t^k
    = S\indices{_{ij}^k}+N\indices{_{ij}^k}+t_ig_j^k+t_jg_i^k+g_{ij}t^k.
\]
We observe that $\mathcal D$ is torsion-free, since $\Xi$ is symmetric in its lower indices.
The $g$-dual connection of $\mathcal D$ is defined by
\[
    Z(g(X,Y))=g(\mathcal D_ZX,Y)+g(X,\mathcal D^*_ZY).    
\]
We observe that $\mathcal D^*$ has torsion
\[
    (\mathcal T^*)_{ij}^k=\Xi\indices{^k_{ij}}-\Xi\indices{^k_{ji}}=N\indices{^k_{ij}}-N\indices{^k_{ji}}.
\]
It remains to verify that $(g,\mathcal D,\mathcal D^*)$ defines the structure of a statistical manifold with torsion on $M$. To this end, we compute
\begin{align*}
    (\mathcal D_Xg)(Y,Z)-(\mathcal D_Yg)(X,Z)
    &= -g(Y,\Xi(X,Z))+g(X,\Xi(Y,Z))
    \\
    &= -g(Y,N(X,Z))+g(X,N(Y,Z))
    = -g(\mathcal T^*(X,Y),Z),
\end{align*}
using the symmetry of $\Xi$, namely $\Xi(X,Y)=\Xi(Y,X)$. Definition~\ref{defn:statistical.manifold} is thus satisfied, i.e.\ $(M,g,\mathcal D^*)$ defines a statistical manifold with torsion $\mathcal T^*$.
We remark that, alternatively, one may argue that $\mathcal D$ is torsion-free and that $\mathcal D^*$ is its dual connection with respect to $g$, drawing upon Proposition 7.1 in \cite{Matsuzoe2010}.
The second claim of Corollary~\ref{cor:torsion} is thus proven.

\section{Discussion and outlook}\label{sec:discussion}

In the present paper we have shown that a second-order (maximally) superintegrable system with $(n+1)$-parameter potential naturally extends to a non-degenerate second-order (maximally) superintegrable system, if and only if $N=0$ holds. We have also shown that this condition extends to the more general conformally superintegrable systems.
Theorems~\ref{thm:main} and~\ref{thm:main.conformal} thus provides an efficient, clean and geometric criterion for the effective investigation of $(n+1)$-parameter systems, determining its non- or semi-degenerate nature.
In Corollary~\ref{cor:torsion} we have re-interpreted this criterion in a geometric way, as a torsion result. Specifically, it can be phrased in terms of a curvature property (``vectorial torsion'') of a naturally defined connection. Moreover, the systems under consideration have a naturally underpinning structure of a statistical manifold with torsion, and the torsion vanishes precisely in the situation that the $(n+1)$-parameter system is extendable.

We mention that this opens the perspective of encoding semi-degenerate systems in terms of a \emph{semi-Weyl structure with torsion}, similar to the formulation of non-degenerate systems via semi-Weyl structures in \cite{Vollmer2025_Weyl}. Indeed, this interpretation is offered by Lemma~\ref{lem:conformally.invariant.on.Weyl}. We denote by $\nabla^{(g,\sigma)}$ the connection defined by $\nabla^{(g,\sigma)}g+2\sigma\otimes g=0$. We may then define the connection
\[
    \mathfrak D_XY:=\nabla^{(g,\sigma)}_XY+S(X,Y)+N(X,Y)+b(X)Y+b(Y)X+g(X,Y)g^{-1}(b)
\]
where $g^ {-1}(b)$ is the vector field associated to the conformally invariant $1$-form $b$. Its $g$-dual $\mathfrak D^\ast$ satisfies the conditions of a semi-Weyl structure with torsion equal to $N$, cf.\ \cite{BN2022}, see also references therein.

As mentioned earlier, all $(n+1)$-parameter systems in dimension $n=2$ are restrictions of non-degenerate systems and thus of the \emph{extendable} type \cite{KKMI05}. Non-extendable examples, on the other hand, exist and are known starting from dimension $n=3$, c.f.~\cite{Evans1990,ERM17}.

\begin{example}\label{ex:non-extendable}
    The generalised Kepler-Coulomb potential with the Hamiltonian
    \begin{align}
        H = p_x^2 + p_y^2 + p_z^2\ + \  \frac{a_1}{x^2} + \frac{a_2}{y^2} + \frac{a_3}{\sqrt{x^2 + y^2 + z^2}}\,,
    \end{align}
    ($x\ne0, y\ne0, x^2+y^2+z^2\ne0$) is an example of a non-extendable $(n+1)$-parameter system, which was first acknowledged in \cite{Evans1990}.
    The structure tensor is straightforwardly obtained as
    \begin{multline}\label{eq:non-extendible}
        D= -\frac{3}{x} (\mathrm{d} x)^3
        - \frac{3}{y} (\mathrm{d} y)^3
        + \frac{\left(x^{2}+y^{2}-2 z^{2}\right)}{z \left(x^{2}+y^{2}+z^{2}\right)} (\mathrm{d} z)^3
        \\
        + \frac{x^2 + 4y^2 + 4z^2}{z(x^2 + y^2 + z^2)} \mathrm{d} x \otimes \mathrm{d} x \otimes \mathrm{d} z
        + \frac{4x^2 + y^2 + 4z^2}{z(x^2 + y^2 + z^2)} \mathrm{d} y \otimes \mathrm{d} y \otimes \mathrm{d} z
        \\
        - \frac{3xy}{z(x^2 + y^2 + z^2)} ( \mathrm{d} x \otimes \mathrm{d} y \otimes \mathrm{d} z + \mathrm{d} y \otimes \mathrm{d} x \otimes \mathrm{d} z)
        \\
        - \frac{3x}{x^2 + y^2 + z^2} (\mathrm{d} x \otimes \mathrm{d} z \otimes \mathrm{d} z + \mathrm{d} z \otimes \mathrm{d} x \otimes \mathrm{d} z)
        \\
        - \frac{3y}{x^2 + y^2 + z^2} (\mathrm{d} y \otimes \mathrm{d} z \otimes \mathrm{d} z + \mathrm{d} z \otimes \mathrm{d} y \otimes \mathrm{d} z)\,,
    \end{multline}
    where $(\mathrm{d}x)^3=\mathrm{d}x\otimes \mathrm{d}x\otimes \mathrm{d}x$ etc.
    By a direct computation using~\eqref{eq:non-extendible}, we obtain
    \begin{multline}
    N = \frac{1}{z(x^2+y^2+z^2)} \Bigl(
        -zy \  \mathrm{d} x \otimes \mathrm{d} x \otimes \mathrm{d} y
        + (y^2-x^2) \  \mathrm{d} x \otimes \mathrm{d} x \otimes \mathrm{d} z
        \\
        + zy \ ( \mathrm{d} x \mathrm{d} y \otimes \mathrm{d} x )
        + zx \ (\mathrm{d} x \mathrm{d} y \otimes \mathrm{d} y )
        - 4xy \ (\mathrm{d} x \mathrm{d} y \otimes \mathrm{d} z)
        + (x^2-y^2) ( \mathrm{d} x \mathrm{d} z \otimes \mathrm{d} x)
        \\
        + 2xy \ (\mathrm{d} x \mathrm{d} z \otimes \mathrm{d} y)
        -zx \ (\mathrm{d} x \mathrm{d} z \otimes \mathrm{d} z)
        - xz (\mathrm{d} y \mathrm{d} y \otimes \mathrm{d} x)
        + (x^2-y^2) ( \mathrm{d} y \mathrm{d} y \otimes \mathrm{d} z)
        \\
        + 2xy (\mathrm{d} y \mathrm{d} z \otimes \mathrm{d} x)
        - (x^2-y^2) ( \mathrm{d} y \mathrm{d} z \otimes \mathrm{d} y)
        -zy ( \mathrm{d} y \mathrm{d} z \otimes \mathrm{d} z)
        + zx (\mathrm{d} z \mathrm{d} z \otimes \mathrm{d} x)
        \\
        + zy (\mathrm{d} z \mathrm{d} z \otimes \mathrm{d} y)
    \Bigr),
    \end{multline}
    where $dxdy=\frac12\left(dx\otimes dy+dy\otimes dx\right)$ etc. This expression is clearly non-vanishing.
    Furthermore, contracting~\eqref{eq:non-extendible} in the first two arguments,
    \begin{equation}\label{eq:s.non-extendible}
        s = -\frac{3}{x} \mathrm{d} x - \frac{3}{y} \mathrm{d} y + \frac{6}{z} \mathrm{d} z\,.
    \end{equation}
    In Lemma~\ref{eq:ds}, we have seen that $N=0$ is sufficient for $s$ to be closed, and hence locally exact. However, the vanishing of $N$ is not necessary for the closedness of $s$. Indeed, the $1$-form $s$ in~\eqref{eq:s.non-extendible} is exact, \(s_k = f_{,k}\), with
    \begin{align*}
        f(x,y,z) &= 
        3\ln\frac{z^2}{xy}\,,
    \end{align*}
    while the condition \(N = 0\) is not satisfied.
\end{example}

\noindent The $(n+1)$-parameter system in Example~\ref{ex:non-extendable} is non-extendable in the second-order sense discussed here, however it does in fact admit an extension to a $(n+2)$-parameter potential, if we allow integrals of the motion of higher order \cite{ERM17}. A further investigation of this phenomenon is left to future research.

With the criterion put forth in Theorem~\ref{thm:main} at hand, non-extendable \emph{second-order} (maximally) superintegrable systems can be identified efficiently among $(n+1)$-parameter systems. Studying these non-extendable systems is going to be a worthwhile enterprise in its own right and, as it is clearly out of the scope of the present paper, it is also left to further research.

\section*{Acknowledgements}

The authors are grateful towards Jonathan Kress and Joshua Capel for discussions.
This research was funded by the German Research Foundation (Deutsche Forschungsgemeinschaft, DFG) through the Research Grant no.~540196982. Andreas Vollmer thanks the University of New South Wales Sydney for hospitality.

\bibliography{reflist}
\bibliographystyle{abbrv}

\end{document}